\newcommand{\bbbn}{\mathbb{N}}
\newcommand{\bbbr}{\mathbb{R}}
\newcommand{\Idx}{\mathcal{I}}
\newcommand{\Jdx}{\mathcal{J}}
\newcommand{\ctI}{\mathcal{T}_{\Idx}}
\newcommand{\lfI}{\mathcal{L}_{\Idx}}
\newcommand{\ctJ}{\mathcal{T}_{\Jdx}}
\newcommand{\lfJ}{\mathcal{L}_{\Jdx}}
\newcommand{\ctx}{\mathcal{T}_x}
\newcommand{\lfx}{\mathcal{L}_x}
\newcommand{\cty}{\mathcal{T}_y}
\newcommand{\lfy}{\mathcal{L}_y}
\newcommand{\ctr}{\mathcal{T}_r}
\newcommand{\ctIJ}{\mathcal{T}_{\Idx\times\Jdx}}
\newcommand{\lfIJ}{\mathcal{L}_{\Idx\times\Jdx}}
\newcommand{\sons}{\mathop{\operatorname{sons}}}
\newcommand{\treeroot}{\mathop{\operatorname{root}}}
\newcommand{\sblockrow}{\mathop{\operatorname{row}}\nolimits^-}
\newcommand{\blockrow}{\mathop{\operatorname{row}}\nolimits}
\newcommand{\blockcol}{\mathop{\operatorname{col}}\nolimits}
\newtheorem{theorem}{Theorem}[section]
\newtheorem{lemma}[theorem]{Lemma}
\newtheorem{definition}[theorem]{Definition}
\newenvironment{proof}[1][Proof]{\noindent \emph{#1.} }{\hfill \ %
    \vrule width 0.5em height 0.5em depth 0em\par\medskip}
\newtheorem{remark}[theorem]{Remark}
\numberwithin{equation}{section}
\title{Adaptive compression of large vectors}
\author{Steffen B\"orm}
\date{\today}
\begin{document}

\maketitle

\begin{abstract}
\noindent
Numerical algorithms for elliptic partial differential equations
frequently employ error estimators and adaptive mesh refinement
strategies in order to reduce the computational cost.

We can extend these techniques to general vectors by splitting
the vectors into a hierarchically organized partition of subsets
and using appropriate bases to represent the corresponding parts
of the vectors.
This leads to the concept of \emph{hierarchical vectors}.

A hierarchical vector with $m$ subsets and bases of rank $k$
requires $mk$ units of storage, and typical operations like the
evaluation of norms and inner products or linear updates
can be carried out in $\mathcal{O}(mk^2)$ operations.

Using an auxiliary basis, the product of a hierarchical vector
and an $\mathcal{H}^2$-matrix can also be computed in
$\mathcal{O}(mk^2)$ operations, and if the result admits an
approximation with $\widetilde m$ subsets in the original basis, this
approximation can be obtained in $\mathcal{O}((m+\widetilde m)k^2)$
operations.
Since it is possible to compute the corresponding approximation
error exactly, sophisticated error control strategies can be used to
ensure the optimal compression.

Possible applications of hierarchical vectors include the approximation
of eigenvectors, optimal control problems, and time-dependent partial
differential equations with moving local irregularities.
\end{abstract}

\section{Introduction}
We consider the standard Poisson problem
\begin{align*}
  -\Delta u(x) &= f(x) &
  &\text{ for all } x\in\Omega,\\
  u(x) &= g(x) &
  &\text{ for all } x\in\partial\Omega,
\end{align*}
where $\Omega$ is a Lipschitz domain with boundary $\partial\Omega$,
$f:\Omega\to\bbbr$ and $g:\partial\Omega\to\bbbr$ are given and
we are looking for the solution $u:\overline{\Omega}\to\bbbr$.

If $f$, $g$ and the boundary of $\Omega$ are sufficiently smooth,
classical regularity theory states that the solution $u$ will also
be smooth.
If only $f$ is smooth, interior regularity results state that
the solution $u$ will at least be smooth in the interior of $\Omega$,
but may have singularities at the boundary.
If $f$ is smooth only part of the domain, the solution $u$ will still
be smooth in the interior of this part.

Discretization schemes can take advantage of these properties to
significantly reduce computational work and storage requirements:
a standard finite element scheme can use fairly large elements to
represent smooth parts of the solution and refine the triangulation
locally close to the non-smooth parts \cite{BA91,VE96,SC98b}, and
sophisticated error estimation techniques \cite{BARH78,DO96,MONOSI00,
MEWO01,ST07,VE13} have been developed to automatically choose parts
of the mesh that should be refined.
A particularly elegant approach can be developed for wavelet
techniques \cite{CODADE01} by looking for the ``most important'' among
the (infinitely many) coefficients of the solution.

All of these techniques rely on special properties of the operators
and spaces involved in the computation, e.g., coercivity of bilinear
forms or local approximation estimates for finite element spaces.

Some of these requirements can be avoided by following
a purely algebraic approach:
instead of using a locally refined discretization, we rely on a
\emph{uniform} discretization that can represent all functions
expected to appear in the algorithm sufficiently well.
For the sake of simplicity, we assume that the discretization
corresponds to mesh points $(x_i)_{i\in\Idx}$, where $\Idx$ is a
finite index set, e.g., the set of nodal points of a finite
element discretization.
Each function $u\in\Omega$ then corresponds to a vector
$\mathbf{u}\in\bbbr^\Idx$ given by
\begin{align*}
  u_i &= u(x_i) &
  &\text{ for all } i\in\Idx.
\end{align*}
Since we are using a uniform mesh, the dimension $n=\#\Idx$
can be expected to be very large, and working with vectors
$\mathbf{u}\in\bbbr^\Idx$ directly would take too long and require too
much storage.

We can significantly improve the efficiency by using \emph{data-sparse
approximations} of vectors.
The compression scheme takes its cue from $\mathcal{H}^2$-matrices
\cite{HAKHSA00,BOHA02,BO10}:
if a function $u:\Omega\to\bbbr$ is smooth in a subdomain
$\omega\subseteq\Omega$, e.g., due to interior regularity properties,
we can approximate $u|_\omega$ by polynomials and obtain
\begin{align*}
  u(x) &\approx \sum_{\nu=1}^k p_\nu(x) \hat u_\nu &
  &\text{ for all } x\in\omega,
\end{align*}
where $(p_\nu)_{\nu=1}^k$ is a polynomial basis and $\hat u\in\bbbr^k$
is a matching coefficient vector.
For the corresponding vector $\mathbf{u}$ we have
\begin{align*}
  u_i &= u(x_i) \approx \sum_{\nu=1}^k p_\nu(x_i) \hat u_\nu &
  &\text{ for all } i\in\Idx \text{ with } x_i\in\omega.
\end{align*}
Introducing the subset
\begin{equation*}
  \hat\omega := \{ i\in\Idx\ :\ x_i\in\omega \}
\end{equation*}
and the matrix $V\in\bbbr^{\hat\omega\times k}$ with
\begin{align*}
  v_{i\nu} &:= p_\nu(x_i) &
  &\text{ for all } i\in\hat\omega,\ \nu\in[1:k],
\end{align*}
we can write the approximation result in the short form
\begin{equation*}
  \mathbf{u}|_{\hat\omega} \approx \mathbf{V} \widehat{\mathbf{u}}.
\end{equation*}
This approximation is only valid for indices in the subset
$\hat\omega\subseteq\Idx$.
In order to obtain an approximation for the entire vector
$\mathbf{u}$, we split the index set $\Idx$ into $m$ disjoint
subsets $\hat\omega_1,\ldots,\hat\omega_m$ and approximate
each subvector $\mathbf{u}|_{\hat\omega_j}$.
The resulting approximation of $\mathbf{u}$ requires $mk$
coefficients, and if the function $u$ is smooth in large
subdomains of $\Omega$, we can expect $mk\ll n$.

Having a representation of $\mathbf{u}$ by $mk$ coefficients
at our disposal, we are of course interested in performing
algebraic operations with these representations, e.g.,
computing linear combinations of compressed vectors, evaluating
inner products and norms, and multiplying compressed vectors
by matrices.
Under suitable assumptions, all of these operations can be
carried out in $\mathcal{O}(mk)$ or $\mathcal{O}(mk^2)$ operations.

Compared to standard adaptive finite element methods, this
approach has several advantages:
\begin{itemize}
  \item hierarchical vectors can be used with any matrix that
     can be approximated by an $\mathcal{H}^2$-matrix, e.g.,
     matrices arising in the boundary element method or in the
     context of population dynamics,
  \item refining and coarsening a hierarchical vector only involves
     adding and removing subtrees of a prescribed cluster tree,
     no special treatment of hanging nodes or differing polynomial
     degrees is required,
  \item linear combinations and inner products of hierarchical
     vectors corresponding to completely different subdivisions
     of the index set can be computed efficiently, and
  \item the approximation error in all of these operations can
     be computed exactly.
\end{itemize}
The algorithm for the efficient multiplication of a hierarchical
vector by an $\mathcal{H}^2$-matrix requires certain precomputed
auxiliary matrices, and in a simple implementation the setup of
these matrices would require $\mathcal{O}(n k^2)$ operations,
making the method only attractive in situations where a large
number of matrix-vector multiplications have to be carried out
with the same $\mathcal{H}^2$-matrix, e.g., for time-dependent
problems like the heat or wave equation, or for the approximation
of eigenvectors by a preconditioned inverse iteration.

This disadvantage can be overcome if the differential or
integral operator underlying the matrix is translation-invariant,
since this property implies that matrices corresponding to
translation-equivalent blocks are identical, and it can be
expected that computing the auxiliary matrices only once for
each equivalence class reduces the complexity to
$\mathcal{O}(\log(n) k^2)$.
Translation-invariance can even be exploited if it is available
only in a subdomain.

\section{Hierarchical vectors}

In order to be able to construct counterparts of local refinement
and coarsening of meshes, we introduce a hierarchy of subsets of $\Idx$.

%
%
\begin{definition}[Labeled tree]
Let $V$ be a finite set, let $r\in V$, let
$S:V\to\mathcal{P}(V)$ be a mapping from $V$ into the power set of $V$,
and let $\iota:V\to M$ be a mapping from $V$ into an arbitrary set $M$.

$\mathcal{T}=(V,r,S,\iota)$ is called a \emph{(labeled) tree} if for
each $v\in V$ there is exactly one sequence $v_0,v_1,\ldots,v_\ell\in V$
such that
\begin{align*}
  v_0 &= r, &
  V_\ell &= v, &
  v_i &\in S(v_{i-1}) &
  &\text{ for all } i\in[1:\ell].
\end{align*}
In this case, $r$ is called the \emph{root} of $\mathcal{T}$ and
denoted by $\treeroot(\mathcal{T})$, and
$S(v)$ are called the \emph{sons} of $v\in V$ and denoted by
$\sons(\mathcal{T},v)$.

For each $v\in V$, $\iota(v)\in M$ is called the \emph{label} of $v$
and denoted by $\hat v$.
\end{definition}

%
%
\begin{definition}[Cluster tree]
\label{de:cluster_tree}
Let $\ctI=(V,r,S,\iota)$ be a labeled tree.
We call it a \emph{cluster tree} for the index set $\Idx$ if
\begin{itemize}
  \item $\hat r = \Idx$, i.e., if the root is labeled with $\Idx$,
  \item we have
    \begin{align*}
      \hat t &= \bigcup_{t'\in\sons(t)} \hat t' &
      &\text{ for all } t\in V \text{ with } \sons(t)\neq\emptyset,
    \end{align*}
    i.e., the label of a cluster is contained in the
    union of the labels of its sons, and
  \item we have
    \begin{align*}
      t_1\neq t_2 &\Rightarrow \hat t_1\cap\hat t_2 = \emptyset &
      &\text{ for all } t\in V,\ t_1,t_2\in\sons(t),
    \end{align*}
    i.e., different sons of the same cluster are disjoint.
\end{itemize}
If $\ctI$ is a cluster tree, we call the elements $t\in V$
\emph{clusters} and use the short notation $t\in\ctI$ for $t\in V$.
\end{definition}

%
%
\begin{definition}[Leaves]
Let $\ctI$ be a cluster tree.
A cluster $t\in\ctI$ is called a \emph{leaf} of $\ctI$
if $\sons(\ctI,t)=\emptyset$.

The set of all leaves is denoted by
\begin{equation*}
  \lfI := \{ t\in\ctI\ :\ \sons(\ctI,t) = \emptyset \}.
\end{equation*}
\end{definition}

%
%
\begin{remark}[Leaf partition]
\label{re:leaf_partition}
A simple induction yields that the set
\begin{equation*}
  \{ \hat t\ :\ t\in\lfI \}
\end{equation*}
is a disjoint partition of $\Idx$, so we can describe a vector
$x\in\bbbr^\Idx$ uniquely by defining its restrictions
\begin{align*}
  x|_{\hat t} &\in\bbbr^{\hat t} &
  &\text{ for all } t\in\lfI.
\end{align*}
\end{remark}

Cluster trees for arbitrary index sets $\Idx$ can be constructed
by fairly general algorithms usually based on recursively splitting
a given subset into a number of disjoint subsets.
If the indices correspond to geometric objects, e.g., points in
a finite element mesh, these algorithms can ensure that clusters
contain indices that are ``geometrically close'' to each other
\cite{GRHA02,BOGRHA03}.

In practical applications, it may be necessary to use different
cluster trees to represent different vectors, e.g., to implement
adaptive refinement towards moving singularities.
In order to keep the corresponding algorithms simple and still be
able to handle varying cluster trees, we use a reference tree $\ctI$
that remains fixed and choose \emph{subtrees} $\ctx$ to represent
vectors $x\in\bbbr^\Idx$.

%
%
\begin{definition}[Subtree]
Let $\ctI$ be a cluster tree for $\Idx$.
A second cluster tree $\ctx$ for $\Idx$ is called a \emph{subtree}
of $\ctI$ if
\begin{itemize}
  \item $\treeroot(\ctx)=\treeroot(\ctI)$,
  \item we have
     \begin{align*}
       t &\in\ctI &
       &\text{ for all } t\in\ctx,\text{ and}
     \end{align*}
  \item we have
     \begin{align*}
       \sons(\ctx,t)\neq\emptyset
       &\Rightarrow \sons(\ctx,t)=\sons(\ctI,t) &
       &\text{ for all } t\in\ctx,
     \end{align*}
     i.e., non-leaf clusters have the same sons in $\ctx$ and $\ctI$.
\end{itemize}
If $\ctx$ is a subtree of $\ctI$, we denote its leaves by $\lfx$.
\end{definition}

The smallest subtree $\ctx$ of $\ctI$ consists only of the
root $r=\treeroot(\ctI)$, with the root a leaf of $\ctx$.
The largest subtree is $\ctI$ itself.

Due to Remark~\ref{re:leaf_partition}, the leaves of a subtree
$\ctx$ also define a disjoint partition of $\Idx$, but this partition
can be significantly coarser than the one corresponding to the
leaves of $\ctI$.

Given a partition of $\Idx$, we now turn our attention to systems
of bases that can be used to represent the subvectors
$x|_{\hat t}$ corresponding to the leaves $t\in\ctx$ of
a cluster tree.

In order to be able to ``refine'' a given hierarchical vector, i.e.,
to subdivide leaves of the corresponding subtree, we require these
bases to be \emph{nested}, i.e., if $x|_{\hat t}$ can be
represented in the basis corresponding to the cluster $t\in\ctI$,
it has to be possible to represent $x|_{\hat t'}$ in the
basis corresponding to its sons $t'\in\sons(\ctI,t)$.

%
%
\begin{definition}[Cluster basis]
\label{de:clusterbasis}
Let $k\in\bbbn$ and let $(V_t)_{t\in\ctI}$ be a family of matrices
such that $V_t\in\bbbr^{\hat t\times k}$ for all $t\in\ctI$.

If there is a family $(E_t)_{t\in\ctI}$ of matrices satisfying
\begin{align}\label{eq:nested}
  V_t|_{\hat t'\times k} &= V_{t'} E_{t'} &
  &\text{ for all } t\in\ctI,\ t'\in\sons(\ctI,t),
\end{align}
we call $(V,E)$ a \emph{cluster basis} of rank $k$ for
the cluster tree $\ctI$.
The matrices $(E_t)_{t\in\ctI}$ are called \emph{transfer matrices}.

We simply write $(V_t)_{t\in\ctI}$ as an abbreviation and introduce
the transfer matrices if they are required.
\end{definition}

%
%
\begin{definition}[Hierarchical vector]
\label{de:hvector}
Let $\ctx$ be a subtree of $\ctI$, let $(V,E)$ be a cluster basis
for $\ctI$.

A vector $x\in\bbbr^\Idx$ is called a \emph{hierarchical vector}
corresponding to $\ctx$ and $(V,E)$ if there is a family
$(\hat x_t)_{t\in\ctx}$ such that
\begin{align}\label{eq:hvector}
  x|_{\hat t} &= V_t \hat x_t &
  &\text{ for all } t\in\lfx.
\end{align}
In this case, we call $(\hat x_t)_{t\in\ctx}$ the \emph{hierarchical
coefficients} for $x$.
\end{definition}

In our setting, the leaves of the subtree $\ctx$ play the role of
the mesh used to represent a function.
Locally refining the mesh corresponds to choosing a leaf $t\in\lfx$
with $\sons(\ctI,t)\neq\emptyset$ and adding $\sons(\ctI,t)$
to the subtree.
Due to (\ref{eq:hvector}) and (\ref{eq:nested}), we have
\begin{align*}
  x|_{\hat t'} &= (V_t \hat x_t)|_{\hat t'}
  = V_t|_{\hat t'\times k} \hat x_t
  = V_{t'} E_{t'} \hat x_t &
  &\text{ for all } t'\in\sons(\ctI,t),
\end{align*}
so the equation
\begin{align*}
  \hat x_{t'} &:= E_{t'} \hat x_t &
  &\text{ for all } t'\in\sons(\ctI,t)
\end{align*}
provides us with hierarchical coefficients for the refined tree.
The procedure is summarized in Figure~\ref{fi:refine}.

%
%
\begin{figure}
  \begin{tabbing}
    \textbf{procedure} \texttt{refine}($t$, \textbf{var} $x$);\\
    \textbf{if} $\sons(\ctI,t)\neq\emptyset$ \textbf{then begin}\\
    \quad\= Add $\sons(\ctI,t)$ as new leaves to $\ctx$;\\
    \> \textbf{for} $t'\in\sons(\ctI,t)$ \textbf{do}\\
    \> \quad\= $\hat x_{t'} \gets E_{t'} \hat x_t$\\
    \textbf{end}
  \end{tabbing}
  \caption{Refining a hierarchical vector}
  \label{fi:refine}
\end{figure}

We can use this procedure to add a hierarchical vector $x$ with
subtree $\ctx$ to another hierarchical vector $y$ with a different
subtree $\cty$, as long as $\ctx$ and $\cty$ are subtrees of $\ctI$:
assume that a cluster $t\in\ctx\cap\cty$ is given.
\begin{enumerate}
  \item If $t\in\lfx$ and $t\in\lfy$ holds, we can simply add $\hat x_t$
        and $\hat y_t$.
        \label{it:add_coefficient}
  \item If $t\not\in\lfx$ and $t\not\in\lfy$, we consider
        $\sons(\ctI,t)=\sons(\ctx,t)=\sons(\cty,t)$ recursively.
        \label{it:add_recursion}
  \item If $t\in\lfx$ and $t\not\in\lfy$, we use (\ref{eq:nested}) to
        obtain temporary coefficient vectors $\hat x_{t'}=E_{t'} \hat x_t$
        that can be added recursively to $y$.
        \label{it:add_leaf_recursion}
  \item If $t\not\in\lfx$ and $t\in\lfy$, we apply \texttt{refine}
        to $y$ and proceed as in case~\ref{it:add_recursion}.
        \label{it:add_refine}
\end{enumerate}
Based on this approach, the update $y \gets y + \alpha x$ can be
performed by the recursive algorithm given in Figure~\ref{fi:add}.
The procedure \texttt{add\_leaf} is used to handle the
cases~\ref{it:add_coefficient} and \ref{it:add_leaf_recursion}, while
the procedure \texttt{add} takes care of the cases~\ref{it:add_recursion}
and \ref{it:add_refine}.

%
%
\begin{figure}
  \begin{minipage}{0.45\textwidth}
  \begin{tabbing}
    \textbf{procedure} \texttt{add\_leaf}($t$, $\hat z_t$, \textbf{var} $y$);\\
    \textbf{if} $\sons(\cty,t)=\emptyset$ \textbf{then}\\
    \quad\= $\hat y_t \gets \hat y_t + \hat z_t$\\
    \textbf{else}\\
    \> \textbf{for} $t'\in\sons(\cty,t)$ \textbf{do begin}\\
    \> \quad\= $\hat z_{t'} \gets E_{t'} \hat z_t$;\\
    \> \> \texttt{add\_leaf}($t'$, $\hat z_{t'}$, $y$)\\
    \> \textbf{end};\\
  \end{tabbing}
  \end{minipage}%
  \hfill%
  \begin{minipage}{0.45\textwidth}
  \begin{tabbing}
    \textbf{procedure} \texttt{add}($t$, $\alpha$, $x$, \textbf{var} $y$);\\
    \textbf{if} $\sons(\ctx,t)=\emptyset$ \textbf{then}\\
    \quad\= \texttt{add\_leaf}($t$, $\alpha \hat x_t$, $y$);\\
    \textbf{else begin}\\
    \> \textbf{if} $\sons(\cty,t)=\emptyset$ \textbf{then}\\
    \> \quad\= \texttt{refine}($t$, $y$);\\
    \> \textbf{for} $t'\in\sons(\ctx,t)$ \textbf{do}\\
    \> \> \texttt{add}($t'$, $\alpha$, $x$, $y$)\\
    \textbf{end}
  \end{tabbing}
  \end{minipage}
  \caption{Adding a hierarchical vector $x$ to a hierarchical vector $y$,
           refining the tree $\cty$ as required}
  \label{fi:add}
\end{figure}

After the procedure \texttt{add} has been completed, $\ctx$ is a
subtree of $\cty$ and the sum of $\alpha x$ and $y$ is represented
exactly by the, possibly refined, hierarchical vector $y$.

%
%
\begin{remark}[Complexity]
Since we only switch to the sons $t'\in\sons(\ctI,t)$ of a cluster
$t$ if either $\sons(\ctx,t)$ or $\sons(\cty,t)$ are not empty,
the procedure \texttt{add} given in Figure~\ref{fi:add} requires
$\mathcal{O}(k^2 (\#\ctx+\#\cty))$ operations.
\end{remark}

We can follow a similar approach to compute inner products and
norms of hierarchical vectors:
let $x,y\in\bbbr^\Idx$ be hierarchical vectors with subtrees
$\ctx$ and $\cty$.
In order to compute the inner product
\begin{equation*}
  \langle x, y \rangle
  = \sum_{i\in\Idx} x_i y_i,
\end{equation*}
we can split $\Idx$ into subsets and consider sub-products
\begin{equation*}
  \langle x, y \rangle_t := \sum_{i\in\hat t} x_i y_i
\end{equation*}
corresponding to clusters $t\in\ctI$.
For $t=\treeroot(\ctI)=\treeroot(\ctx)=\treeroot(\cty)$, we obtain
the full inner product $\langle x, y \rangle$.

Let $t\in\ctI$.
If $t\in\lfx$ and $t\in\lfy$, we have
\begin{align*}
  x|_{\hat t} &= V_t \hat x_t, &
  y|_{\hat t} &= V_t \hat y_t
\end{align*}
and find
\begin{equation}\label{eq:dotprod1}
  \langle x, y \rangle_t
  = \sum_{i\in\hat t} x_i y_i
  = \sum_{i\in\hat t} (V_t \hat x_t)_i (V_t \hat y_t)_i
  = \hat x_t^* V_t^* V_t \hat y_t.
\end{equation}
The products $C_t := V_t^* V_t$ required to evaluate this expression
are small $k\times k$ matrices that can be prepared using the recursion
\begin{align}\label{eq:product}
  C_t &= \begin{cases}
    V_t^* V_t &\text{ if } \sons(\ctI,t)=\emptyset,\\
    \sum_{t'\in\sons(t)} E_{t'}^* C_{t'} E_{t'} &\text{ otherwise}
  \end{cases} &
  &\text{ for all } t\in\ctI
\end{align}
due to (\ref{eq:nested}).
With these matrices, the inner product (\ref{eq:dotprod1}) can be
evaluated in $\mathcal{O}(k^2)$ operations.

If $t\not\in\lfx$ and $t\not\in\lfy$, we can use
Definition~\ref{de:cluster_tree} to get
\begin{equation*}
  \langle x, y \rangle_t
  = \sum_{t'\in\sons(\ctI,t)} \langle x, y \rangle_{t'},
\end{equation*}
i.e., we can compute the products for the sons $t'\in\sons(\ctI,t)$
by recursion and add the results.

If $t\in\lfx$ and $t\in\cty\setminus\lfy$, we can again use
Definition~\ref{de:cluster_tree} and (\ref{eq:nested}) to obtain
\begin{align*}
  \langle x, y \rangle_t
  &= \sum_{t'\in\sons(\cty,t)} \langle x, y \rangle_{t'}
   = \sum_{t'\in\sons(\cty,t)} \langle V_t \hat x_t, y \rangle_{t'}\\
  &= \sum_{t'\in\sons(t)} \langle V_{t'} E_{t'} \hat x_t, y \rangle_{t'}
   = \sum_{t'\in\sons(t)} \langle V_{t'} \hat z_{t'}, y \rangle_{t'}
\end{align*}
with the auxiliary vectors $\hat z_{t'} = E_{t'} \hat x_t$.
We can repeat this procedure recursively until we reach a leaf $t\in\lfy$
and then use $C_t$ as before.

If $t\in\ctx\setminus\lfx$ and $t\in\lfy$, we can use the same
approach with auxiliary vectors $\hat z_{t'} = E_{t'} \hat y_t$.

The resulting recursive algorithm is summarized in Figure~\ref{fi:dot}.
Due to $\|x\| = \sqrt{\langle x, x\rangle}$, we can also use this
function to compute the norm of a hierarchical vector.

%
%
\begin{figure}
  \begin{minipage}[t]{0.45\textwidth}
  \begin{tabbing}
    \textbf{function} \texttt{dot\_leaf}($t$, $\hat z_t$, $y$) : \textbf{real};\\
    \textbf{if} $\sons(\cty,t)=\emptyset$ \textbf{then}\\
    \quad\= \textbf{return} $\hat z_t^* C_t \hat y_t$\\
    \textbf{else begin}\\
    \> $\gamma \gets 0$;\\
    \> \textbf{for} $t'\in\sons(\cty,t)$ \textbf{do begin}\\
    \> \quad\= $\hat z_{t'} \gets E_{t'} \hat z_t$;\\
    \> \> $\gamma \gets \gamma + \texttt{dot\_leaf}(t', \hat z_{t'}, y)$;\\
    \> \textbf{end};\\
    \> \textbf{return} $\gamma$\\
    \textbf{end}
  \end{tabbing}
  \end{minipage}%
  \hfill%
  \begin{minipage}[t]{0.45\textwidth}
  \begin{tabbing}
    \textbf{function} \texttt{dot}($t$, $x$, $y$) : \textbf{real};\\
    \textbf{if} $\sons(\ctx,t)=\emptyset$ \textbf{then}\\
    \quad\= \textbf{return} \texttt{dot\_leaf}($t$, $\hat x_t$, $y$)\\
    \textbf{else if} $\sons(\cty,t)=\emptyset$ \textbf{then}\\
    \> \textbf{return} \texttt{dot\_leaf}($t$, $\hat y_t$, $x$)\\
    \textbf{else begin}\\
    \> $\gamma \gets 0$;\\
    \> \textbf{for} $t'\in\sons(\ctx,t)$ \textbf{do}\\
    \> \quad\= $\gamma \gets \gamma + \texttt{dot}(t',x,y)$;\\
    \> \textbf{return} $\gamma$\\
    \textbf{end}
  \end{tabbing}
  \end{minipage}
  \caption{Compute the inner product of two hierarchical vectors $x$ and $y$}
  \label{fi:dot}
\end{figure}

%
%
\begin{remark}[Complexity]
Preparing $C_t$ for all $t\in\ctI$ requires
${\mathcal O}(k^2 \#\Idx)$ operations for the leaves and
${\mathcal O}(k^3 \#\ctI)$ operations for the non-leaf clusters
\cite[Section~5.3]{BO10}.

Once these matrices have been prepared, the procedure \texttt{dot}
given in Figure~\ref{fi:dot} requires $\mathcal{O}(k^2 (\#\ctx+\#\cty))$
operations.
\end{remark}

\section{Coarsening}

Adding two hierarchical vectors $x$ and $y$ using the procedure \texttt{add}
given in Figure~\ref{fi:add} will yield a new vector with a refined
cluster tree that contains both $\ctx$ and $\cty$ as subtrees.
This tree may not be optimal, as can be seen by considering the extreme
example of adding $x$ and $-x$ and obtaining the zero vector that can
obviously be expressed by the minimal subtree of $\ctI$.

In order to keep the computational complexity as low as possible,
we introduce an algorithm that does the opposite of refining a
subtree, i.e., coarsening the tree.
Where the procedure \texttt{refine} given in Figure~\ref{fi:refine}
splits a leaf into sons, the new \texttt{coarsen} procedure merges
sons into a new leaf.

If we would use this procedure only in situations where it does not
change the vector at all, it would be of very limited use.
It makes more sense to consider situations where coarsening yields
a reasonably good \emph{approximation} of the original vector.

In order to devise a reliable algorithm, we have to investigate
the approximation errors introduced by hierarchical vectors.
We are interested in nothing less but the \emph{best} approximation
of a given vector, and this best approximation with respect to
the Euclidean norm is given by an orthogonal projection.
These projections are immediately available to us if we have an
orthonormal basis at our disposal.

%
%
\begin{definition}[Isometric cluster basis]
A cluster basis $(Q_t)_{t\in\ctI}$ is called \emph{isometric} if
we have
\begin{align}\label{eq:isometric}
  Q_t^* Q_t &= I &
  &\text{ for all } t\in\ctI.
\end{align}
\end{definition}

There is an efficient algorithm that can turn any cluster basis
into an isometric cluster basis without any change in its approximation
properties \cite[Section~5.4]{BO10}, so requiring a cluster basis to
be isometric is not a significant restriction.

For an isometric basis, a simple computation yields
\begin{align}\label{eq:projection}
  \|x - Q_t y\|^2
  &= \|x - Q_t Q_t^* x\|^2 + \|y - Q_t^* x\|^2 &
  &\text{ for all } t\in\ctI,\ x\in\bbbr^{\hat t},\ y\in\bbbr^k,
\end{align}
and we conclude that $Q_t Q_t^* x$ is the best approximation of
$x$ in the range of $Q_t$.

Just as \texttt{refine} splits a given leaf cluster $t$ into its
sons $t'\in\sons(\ctI,t)$, we are looking for an algorithm that
merges the sons $t'\in\sons(\ctI,t)$ into the father $t$.
We assume that $t\in\ctx$ is given in such a way that all of
its sons $t'\in\sons(\ctx,t)$ are leaves of $\ctx$.
To keep the presentation simple, we consider only the case of
a binary tree, i.e., $\#\sons(\ctx,t)=2$ and $\sons(\ctx,t)=\{t_1,t_2\}$.
Since the sons are assumed to be leaves of $\ctx$,
Definition~\ref{de:hvector} yields
\begin{equation*}
  x|_{\hat t}
  = \begin{pmatrix}
      x|_{\hat t_1}\\
      x|_{\hat t_2}
    \end{pmatrix}
  = \begin{pmatrix}
      Q_{t_1} \hat x_{t_1}\\
      Q_{t_2} \hat x_{t_2}
    \end{pmatrix}.
\end{equation*}
We want $t$ to become a leaf, so we have to find a coefficient vector
$\hat x_t$ with $x|_{\hat t} \approx Q_t \hat x_t$.
Due to (\ref{eq:projection}), the best choice is given by the orthogonal
projection, i.e.,
\begin{equation}\label{eq:coarsen_coeff}
  \hat x_t := Q_t^* x|_{\hat t}.
\end{equation}
Computing $\hat x_t$ by this equation would be very inefficient, since
it would require us to first construct the entire vector $x|_{\hat t}$
and then approximate it again.
In order to reduce the number of operations, we rely on the nested
structure (\ref{eq:nested}) of the cluster basis:
denoting the transfer matrices for $(Q_t)_{t\in\ctI}$ by $(F_t)_{t\in\ctI}$,
we have
\begin{equation*}
  Q_t = \begin{pmatrix}
          Q_{t_1} F_{t_1}\\
          Q_{t_2} F_{t_2}
        \end{pmatrix}
\end{equation*}
and find
\begin{align*}
  \hat x_t &= Q_t^* x|_{\hat t}
  = \begin{pmatrix}
      F_{t_1}^* Q_{t_1}^* & F_{t_2}^* Q_{t_2}^*
    \end{pmatrix}
    \begin{pmatrix}
      Q_{t_1} \hat x_{t_1}\\
      Q_{t_2} \hat x_{t_2}
    \end{pmatrix}\\
  &= \begin{pmatrix}
      F_{t_1}^* & F_{t_2}^*
    \end{pmatrix}
    \begin{pmatrix}
      Q_{t_1}^* Q_{t_1} \hat x_{t_1}\\
      Q_{t_2}^* Q_{t_2} \hat x_{t_2}
    \end{pmatrix}
  = \begin{pmatrix}
      F_{t_1}^* & F_{t_2}^*
    \end{pmatrix}
    \begin{pmatrix}
      \hat x_{t_1}\\
      \hat x_{t_2}
    \end{pmatrix}
  = F_{t_1}^* \hat x_{t_1} + F_{t_2}^* \hat x_{t_2}.
\end{align*}
Using this equation, we can compute the optimal $\hat x_t$ given
by (\ref{eq:coarsen_coeff}) efficiently.
The coarse\-ning procedure is summarized in Figure~\ref{fi:coarsen}.

%
%
\begin{figure}
  \begin{tabbing}
    \textbf{procedure} \texttt{coarsen}($t$, \textbf{var} $x$);\\
    \textbf{if} $t'\in\lfx$ for all $t'\in\sons(\ctx,t)$ \textbf{then begin}\\
    \quad\= $\hat x_t \gets 0$;\\
    \> \textbf{for} $t'\in\sons(\ctI,t)$ \textbf{do begin}\\
    \> \quad\= $\hat x_t \gets \hat x_t + F_{t'}^* \hat x_{t'}$;\\
    \> \> Remove $t'$ from $\ctx$\\
    \> \textbf{end}\\
    \textbf{end}
  \end{tabbing}
  \caption{Coarsening a hierarchical vector}
  \label{fi:coarsen}
\end{figure}

%
%
\begin{remark}[Complexity]
If there is a constant $C_\textrm{sn}$ such that $\#\sons(\ctI,t)\leq C_\textrm{sn}$
holds for all $t\in\ctI$, the procedures \texttt{refine} and \texttt{coarsen}
require only $\mathcal{O}(k^2)$ operations.
\end{remark}

In order to obtain an adaptive algorithm, we have to be able to control
the error introduced by coarsening steps.
We define
\begin{equation*}
  \widehat{Q}_t := \begin{pmatrix}
    F_{t_1}\\
    F_{t_2}
  \end{pmatrix}\in\bbbr^{(2k)\times k}
\end{equation*}
and find that (\ref{eq:nested}) takes the form
\begin{equation*}
  Q_t = \begin{pmatrix}
          Q_{t_1} & \\
          & Q_{t_2}
        \end{pmatrix} \widehat{Q}_t.
\end{equation*}
The error can be written as
\begin{align*}
  x|_{\hat t} - Q_t Q_t^* x|_{\hat t}
  &= \begin{pmatrix}
      Q_{t_1} \hat x_{t_1}\\
      Q_{t_2} \hat x_{t_2}
    \end{pmatrix}
  - \begin{pmatrix}
      Q_{t_1} & \\
      & Q_{t_2}
    \end{pmatrix} \widehat{Q}_t
    \widehat{Q}_t^*
    \begin{pmatrix}
      Q_{t_1}^* & \\
      & Q_{t_2}^*
    \end{pmatrix}
    \begin{pmatrix}
      Q_{t_1} \hat x_{t_1}\\
      Q_{t_2} \hat x_{t_2}
    \end{pmatrix}\\
  &= \begin{pmatrix}
      Q_{t_1} & \\
      & Q_{t_2}
    \end{pmatrix}
    \left[
      \begin{pmatrix}
        \hat x_{t_1}\\ \hat x_{t_2}
      \end{pmatrix}
      - \widehat{Q}_t \widehat{Q}_t^*
      \begin{pmatrix}
        \hat x_{t_1}\\ \hat x_{t_2}
      \end{pmatrix}
    \right].
\end{align*}
Since $Q_{t_1}$ and $Q_{t_2}$ are isometric matrices, they leave
the Euclidean norm unchanged and we conclude
\begin{equation*}
  \|x|_{\hat t} - Q_t Q_t^* x|_{\hat t}\|
  = \left\|
      \begin{pmatrix}
        \hat x_{t_1}\\ \hat x_{t_2}
      \end{pmatrix}
      - \widehat{Q}_t \widehat{Q}_t^*
      \begin{pmatrix}
        \hat x_{t_1}\\ \hat x_{t_2}
      \end{pmatrix}
    \right\|.
\end{equation*}
In theory, this equation allows us to evaluate the error explicitly in
$\mathcal{O}(k^2)$ operations.
In practice, however, we are subtracting two vectors with, hopefully,
very similar entries, so we have to expect rounding errors to influence
the result significantly.

We can avoid this problem by introducing a suitable auxiliary matrix:
since $Q_t$, $Q_{t_1}$ and $Q_{t_2}$ are isometric, we have
\begin{equation*}
  I = Q_t^* Q_t
  = \widehat{Q}_t^*
    \begin{pmatrix}
      Q_{t_1}^* & \\
      & Q_{t_2}^*
    \end{pmatrix}
    \begin{pmatrix}
      Q_{t_1} & \\
      & Q_{t_2}
    \end{pmatrix}
    \widehat{Q}_t
  = \widehat{Q}_t^* \widehat{Q}_t,
\end{equation*}
so the matrix $\widehat{Q}_t$ is also isometric.
This means that we can extend it to an orthonormal basis, i.e.,
we can find an isometric matrix $\widehat{P}_t\in\bbbr^{(2k)\times k}$ such that
\begin{equation}\label{eq:Pt_def}
  \begin{pmatrix}
    \widehat{Q}_t & \widehat{P}_t
  \end{pmatrix}\in\bbbr^{(2k)\times(2k)}
\end{equation}
is orthogonal and square, e.g., by computing the Householder factorization
of $\widehat{Q}_t$ and accumulating the elementary reflections.
This implies
\begin{align*}
  I &= \begin{pmatrix}
      \widehat{Q}_t & \widehat{P}_t
    \end{pmatrix}
    \begin{pmatrix}
      \widehat{Q}_t^*\\ \widehat{P}_t^*
    \end{pmatrix}
  = \widehat{Q}_t \widehat{Q}_t^* + \widehat{P}_t \widehat{P}_t^*, &
  I - \widehat{Q}_t \widehat{Q}_t^*
  &= \widehat{P}_t \widehat{P}_t^*,
\end{align*}
and we conclude
\begin{equation*}
  \begin{pmatrix}
    \hat x_{t_1}\\ \hat x_{t_2}
  \end{pmatrix}
  - \widehat{Q}_t \widehat{Q}_t^*
  \begin{pmatrix}
    \hat x_{t_1}\\ \hat x_{t_2}
  \end{pmatrix}
  = \widehat{P}_t \widehat{P}_t^*
  \begin{pmatrix}
    \hat x_{t_1}\\ \hat x_{t_2}
  \end{pmatrix}.
\end{equation*}
Since $\widehat{P}_t$ is isometric, the first factor on the
right-hand side does not influence the norm and we have
proven the following result:

%
%
\begin{theorem}[Coarsening error]
The matrices $(\widehat{P}_t)_{t\in\ctI\setminus\lfI}$ defined by
(\ref{eq:Pt_def}) satisfy
\begin{align}\label{eq:coarsen_error}
  \|x|_{\hat t} - Q_t Q_t^* x|_{\hat t}\|
  &= \left\| \widehat{P}_t^*
      \begin{pmatrix}
        \hat x_{t_1}\\ \hat x_{t_2}
      \end{pmatrix} \right\| &
  &\text{ for all } t\in\ctI\setminus\lfI,
         \ x|_{\hat t} = \begin{pmatrix}
                  Q_{t_1} \hat x_{t_1}\\
                  Q_{t_2} \hat x_{t_2}
                \end{pmatrix}.
\end{align}
\end{theorem}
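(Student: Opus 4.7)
The plan is to chain together the three reductions already laid out in the paragraphs preceding the statement, turning the computation of $\|x|_{\hat t} - Q_t Q_t^* x|_{\hat t}\|$ into a norm on the small $2k$-dimensional coefficient space. First I would use the nested structure (\ref{eq:nested}), which in the binary case reads
\begin{equation*}
  Q_t = \begin{pmatrix} Q_{t_1} & \\ & Q_{t_2} \end{pmatrix} \widehat{Q}_t,
\end{equation*}
to rewrite $Q_t^* x|_{\hat t}$ in terms of the subcoefficients $\hat x_{t_1}$ and $\hat x_{t_2}$. The isometry relations $Q_{t_1}^* Q_{t_1} = I$ and $Q_{t_2}^* Q_{t_2} = I$ collapse the inner factors, yielding that $x|_{\hat t} - Q_t Q_t^* x|_{\hat t}$ is the block vector $\mathrm{diag}(Q_{t_1},Q_{t_2})$ applied to $(\hat x_{t_1};\hat x_{t_2}) - \widehat{Q}_t \widehat{Q}_t^* (\hat x_{t_1};\hat x_{t_2})$.

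Next I would observe that $\mathrm{diag}(Q_{t_1},Q_{t_2})$ is isometric, so taking Euclidean norms removes it and leaves exactly the small-space residual $\|(I - \widehat{Q}_t\widehat{Q}_t^*)(\hat x_{t_1};\hat x_{t_2})\|$. The final step is to replace $I - \widehat{Q}_t\widehat{Q}_t^*$ by a quantity that is numerically stable: since $Q_t^* Q_t = I$ and the two diagonal blocks $Q_{t_i}$ are isometric, a direct calculation gives $\widehat{Q}_t^* \widehat{Q}_t = I$, so $\widehat{Q}_t$ itself is isometric. Therefore an orthogonal completion $\widehat{P}_t$ exists, and $(\widehat{Q}_t \; \widehat{P}_t)$ being square and orthogonal yields the identity $I - \widehat{Q}_t\widehat{Q}_t^* = \widehat{P}_t \widehat{P}_t^*$.

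From there the conclusion is immediate: applying this identity to $(\hat x_{t_1};\hat x_{t_2})$ and using the isometry of $\widehat{P}_t$ to drop the outer $\widehat{P}_t$ factor inside the norm gives (\ref{eq:coarsen_error}). There is no real obstacle here beyond bookkeeping; the only genuinely non-routine step is recognising that $\widehat{Q}_t$ is itself isometric (which needs both $Q_t^*Q_t = I$ and the block structure) so that an orthogonal completion $\widehat{P}_t$ is guaranteed to exist and can be produced, e.g.\ via a Householder QR factorisation of $\widehat{Q}_t$.
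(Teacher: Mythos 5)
Your proposal is correct and follows essentially the same route as the paper: rewrite the error as $\mathrm{diag}(Q_{t_1},Q_{t_2})$ applied to the small-space residual, drop the isometric diagonal factor, show $\widehat{Q}_t$ is itself isometric via $Q_t^*Q_t = I$, and replace $I - \widehat{Q}_t\widehat{Q}_t^*$ by $\widehat{P}_t\widehat{P}_t^*$ using the orthogonal completion. No meaningful differences from the paper's derivation.
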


%
%
\begin{remark}[Implementation and complexity]
\label{re:coarsening_complexity}
To obtain a fast and robust algorithm, we can construct $\widehat{P}_t$
in (\ref{eq:Pt_def}) by applying $k$ Householder reflections
$P_1,\ldots,P_k$ to triangularize $\widehat{Q}_t$, i.e., to obtain
$P_k P_{k-1} \ldots P_1 \widehat{Q}_t = R$ with an upper triangular
matrix $R\in\bbbr^{2k\times k}$.
Now $\widehat{P}_t$ consists of the last $k$ columns of
$P_1^* P_2^* \ldots P_k^*$.
If we compute
\begin{equation}\label{eq:PkP1_vector}
  P_k P_{k-1} \ldots P_1
  \begin{pmatrix}
    \hat x_{t_1}\\ \hat x_{t_2}
  \end{pmatrix},
\end{equation}
we find the error vector in the last $k$ rows of the result.

By choosing the signs in the construction of the Householder vectors
correctly, it is possible to ensure that the first $k$ columns
of $P_1^* P_2^* \ldots P_k^*$ coincide with $\widehat{Q}_t$.
In this case, we can find the coefficient vector
$\hat x_t = Q_t^* x|_{\hat t}$ in the first $k$ components of
(\ref{eq:PkP1_vector}) without any additional work.

If we assume that constructing a Householder vector of dimension
$n\in\bbbn$ and applying the corresponding reflection takes not
more than $C_\textrm{qr} n$ operations, finding the $k$ Householder
reflections for triangularizing the $(2k)\times k$-matrix $\widehat{Q}_t$
takes not more than $C_\textrm{qr} (2k) k^2 = 2 C_\textrm{qr} k^3$ operations and
applying $\widehat{P}_t^*$ to a vector takes not more than
$C_\textrm{qr} (2k) k = 2 C_\textrm{qr} k^2$.

In the general case, i.e., if we do not assume $\ctI$ to be a binary
tree, we get
\begin{equation*}
  C_\textrm{qr} k^3 \#\sons(\ctI,t)
  \qquad\text{ and }\qquad
  C_\textrm{qr} k^2 \#\sons(\ctI,t)
\end{equation*}
operations, respectively, and we can conclude that preparing the
matrices $\widehat{P}_t$ for the entire cluster tree takes not more than
$C_\textrm{qr} k^3 \#\ctI$ operations, while coarsening a hierarchical vector
with a subtree $\ctx$ takes not more than $C_\textrm{qr} k^2 \#\ctx$ operations.
\end{remark}

\section{\texorpdfstring{$\mathcal{H}^2$-matrices}{H2-matrices}}

We have developed algorithms for adding hierarchical vectors, for
computing norms and inner products, and for refining and coarsening
the corresponding subtrees.

Now we consider the multiplication of a hierarchical vector by
a matrix.
Let $\ctI$ and $\ctJ$ be cluster trees for the index sets $\Idx$
and $\Jdx$.
We are looking for an algorithm that takes a matrix
$A\in\bbbr^{\Idx\times\Jdx}$ and a hierarchical vector $x\in\bbbr^\Jdx$
corresponding to a subtree $\ctx$ of $\ctJ$ and computes
a new hierarchical vector $y\in\bbbr^\Idx$ such that $y=Ax$, and we
would like this computation to take only $\mathcal{O}(k^2 \#\ctx)$ operations.

This is obviously not possible for general matrices $A$, so we have
to restrict our attention to a suitable subset of matrices.
\emph{$\mathcal{H}^2$-matrices} \cite{HAKHSA00,BOHA02,BO10} have the
necessary properties.

Just like a hierarchical vector is based on a cluster tree $\ctI$
that describes a hierarchical splitting of the index set $\Idx$,
an $\mathcal{H}^2$-matrix is based on a \emph{block tree} that
describes a hierarchical splitting of $\Idx\times\Jdx$.

%
%
\begin{definition}[Block tree]
\label{de:block_tree}
Let $\ctIJ=(V,r,S,\iota)$ be a labeled tree.
We call it a \emph{block tree} for the cluster trees $\ctI$ and $\ctJ$
if
\begin{itemize}
  \item for each $b\in V$ there are $t\in\ctI$ and $s\in\ctJ$
        such that $b=(t,s)$ and $\hat b=\hat t\times\hat s$,
  \item $\treeroot(\ctIJ)=(\treeroot(\ctI),\treeroot(\ctJ))$,
  \item if $b=(t,s)\in V$ is not a leaf, we have
        $\sons(\ctIJ,b)=\sons(\ctI,t)\times\sons(\ctJ,s)$.
\end{itemize}
If $\ctIJ$ is a block tree for $\ctI$ and $\ctJ$, we call the elements
$b\in V$ \emph{blocks} and use the short notation $b\in\ctIJ$ for
$b\in V$.
We denote its leaves by $\lfIJ$.
\end{definition}

It is easy to see that a block tree $\ctIJ$ for $\ctI$ and $\ctJ$
is a special cluster tree for the index set $\Idx\times\Jdx$,
and Remark~\ref{re:leaf_partition} yields that
\begin{equation*}
  \{ \hat t\times\hat s\ :\ b=(t,s)\in\lfIJ \}
\end{equation*}
is a disjoint partition of $\Idx\times\Jdx$, so we can describe a
matrix $A\in\bbbr^{\Idx\times\Jdx}$ uniquely by defining its restrictions
$A|_{\hat t\times\hat s}$ for all $b=(t,s)\in\lfIJ$.
An $\mathcal{H}^2$-matrix represents these submatrices by a
three-term factorization using cluster bases.

%
%
\begin{definition}[$\mathcal{H}^2$-matrix]
Let $\ctIJ$ be a block tree for $\ctI$ and $\ctJ$.
Let $(V_t)_{t\in\ctI}$ and $(W_s)_{s\in\ctJ}$ be cluster bases for
$\ctI$ and $\ctJ$, respectively.

We call a matrix $A\in\bbbr^{\Idx\times\Jdx}$ an \emph{$\mathcal{H}^2$-matrix}
with respect to $\ctIJ$, $(V_t)_{t\in\ctI}$ and $(W_s)_{s\in\ctJ}$ if
for each $b=(t,s)\in\lfIJ$ we can find $S_b\in\bbbr^{k\times k}$
such that
\begin{equation}\label{eq:vsw}
  A|_{\hat t\times\hat s} = V_t S_b W_s^*.
\end{equation}
In this case, the matrices $S_b$ are called \emph{coupling matrices},
the cluster basis $(V_t)_{t\in\ctI}$ is called the \emph{row cluster basis},
and the cluster basis $(W_s)_{s\in\ctJ}$ is called the
\emph{column cluster basis}.
\end{definition}

%
%
\begin{remark}[Special case]
\label{re:special_case}
A more general definition of $\mathcal{H}^2$-matrices is commonly found
in the literature \cite{HAKHSA00,BOHA02,BO10}.
Our definition is equivalent if
\begin{itemize}
  \item all leaves of the cluster trees $\ctI$ and $\ctJ$ appear on
        the same level, and
  \item for all leaves $t\in\lfI$ and $s\in\lfJ$, the matrices $V_t$
        and $W_s$ have full rank.
\end{itemize}
The first assumption allows us to avoid special cases in the construction
of the block tree, the second assumption allows us to express all leaf
blocks in the form (\ref{eq:vsw}), even if they do not satisfy the
\emph{admissibility conditions} that are usually employed to determine
approximability.

We make both assumptions only to keep the presentation simple, all
algorithms and theoretical arguments in the following can be extended
to the general case by handling a moderate number of special cases.
\end{remark}

\section{Matrix-vector multiplication and induced bases}

Let $x\in\bbbr^\Jdx$ be a hierarchical vector for a subtree $\ctx$
of $\ctJ$ and an isometric cluster basis $(Q_t)_{t\in\ctI}$.

Let $A\in\bbbr^{\Idx\times\Jdx}$ be an $\mathcal{H}^2$-matrix for the
block tree $\ctIJ$, the row cluster basis $(V_t)_{t\in\ctI}$ and
the column cluster basis $(W_s)_{s\in\ctJ}$.
We assume that both bases have rank $k_A$, while we keep using $k$
to denote the rank used by the hierarchical vectors.

We want to compute the matrix-vector product $y := A x\in\bbbr^\Idx$
efficiently, i.e., the number of operations should be in
$\mathcal{O}((k_A+k)^2 \ctx)$.

If $x$ was a vector without hierarchical structure, we could
split $A$ recursively into submatrices according to the block
tree and evaluate the contributions of the leaves to the result.
Since $x$ is a hierarchical vector, we have to modify the procedure
and stop splitting as soon as we reach a leaf of the subtree $\ctx$
describing the structure of $x$:
let $b=(t,s)\in\ctIJ$ with $s\in\ctx$.
We consider the problem of evaluating $A|_{\hat t\times\hat s} x|_{\hat s}$.
\begin{enumerate}
  \item If $\sons(\ctIJ,b)=\emptyset$, we have
        $A|_{\hat t\times\hat s} x|_{\hat s} = V_t S_b W_s^* x|_{\hat s}$ and
        can compute the product explicitly.
        \label{it:mvm_leaf}
  \item If $\sons(\ctIJ,b)\neq\emptyset$ and $\sons(\ctx,s)\neq\emptyset$,
        consider all submatrices $A|_{t'\times s'}$ with
        $b'=(t',s')\in\sons(\ctIJ,b)$ by recursion.
        \label{it:mvm_recursion}
  \item If $\sons(\ctIJ,b)\neq\emptyset$ and $\sons(\ctx,s)=\emptyset$,
        we have no choice but to compute $A|_{\hat t\times\hat s} x|_{\hat s}$
        directly.
        \label{it:mvm_nonleaf}
\end{enumerate}
Case~\ref{it:mvm_recursion} is straightforward, we only have to ensure
that the result $y$ has a hierarchical structure that matches the
recursion.
This can be easily accomplished by using the procedure \texttt{refine}.

Case~\ref{it:mvm_leaf} can be handled as for standard
$\mathcal{H}^2$-matrices:
we prepare auxiliary vectors $\bar x_s = W_s^* x|_{\hat s}$ for all
$s\in\ctx$ in advance by a \emph{backward transformation},
accumulate all contributions to a row cluster $t\in\ctI$ in
an auxiliary vector $\bar y_t$, and finally add $V_t \bar y_t$ to
the result using a \emph{forward transformation}.

Let us first consider the backward transformation.
If $s\in\ctx$ is a leaf, we have $x|_{\hat s} = Q_s \hat x_s$ by
definition and need to compute
\begin{equation*}
  \bar x_s = W_s^* Q_s \hat x_s.
\end{equation*}
If we have the auxiliary matrices
\begin{align*}
  D_s &:= W_s^* Q_s \in\bbbr^{k_A\times k} &
  &\text{ for all } s\in\ctJ
\end{align*}
at our disposal, we can evaluate $\bar x_s = D_s \hat x_s$
in $\mathcal{O}(k_A k)$ operations.
In order to prepare these matrices, we can follow a similar
approach as in (\ref{eq:product}):
denoting the transfer matrices for the column basis $(W_s)_{s\in\ctJ}$
by $(E_{W,s})_{s\in\ctJ}$ and the transfer matrices for the vector
basis $(Q_s)_{s\in\ctJ}$ by $(F_s)_{s\in\ctJ}$, we can use (\ref{eq:nested})
to get
\begin{align}\label{eq:product2}
  D_s &= \begin{cases}
    W_s^* Q_s &\text{ if } \sons(\ctJ,s)=\emptyset,\\
    \sum_{s'\in\sons(\ctJ,s)} E_{W,s'}^* D_{s'} F_{s'} &\text{ otherwise}
  \end{cases} &
  &\text{ for all } s\in\ctJ,
\end{align}
and this allows us to compute all of these matrices in
$\mathcal{O}(k_A (k_A + k) k \#\ctJ)$ operations.

If $s\in\ctx$ is not a leaf, we can again use (\ref{eq:nested})
to find
\begin{align*}
  \bar x_s &= W_s^* x|_{\hat s}
  = \sum_{s'\in\sons(\cty,x)} W_s|_{s'\times k}^* x|_{\hat s'}
  = \sum_{s'\in\sons(\cty,x)} E_{W,s'}^* W_{s'}^* x|_{\hat s'}\\
  &= \sum_{s'\in\sons(\cty,x)} E_{W,s'}^* \bar x_{s'}
   \qquad\text{ for all } s\in\cty,\ \sons(\cty,s)\neq\emptyset.
\end{align*}
The resulting algorithm is called the \emph{forward transformation}
and is given as the procedure \texttt{forward} in Figure~\ref{fi:forward}.

If our recursive algorithm encounters a leaf $b=(t,s)\in\lfIJ$,
it looks up $\bar x_s = W_s^* x|_{\hat s}$ among the vectors
prepared by the forward transformation, multiplies it by $S_b$,
and adds it to an auxiliary vector $\bar y_t$ that collects all
contributions to a row cluster $t$.
As mentioned in the discussion of case~\ref{it:mvm_recursion}, we
assume that a subtree $\cty$ is created by the recursive procedure
that ensures $t\in\cty$, so we only need the auxiliary vectors
for these clusters.

In a last step, we have to take care of these temporary results,
i.e., we have to add $V_t \bar y_t$ to the final result for all $t\in\cty$.
If we represent the result $y\in\bbbr^\Idx$ as a hierarchical
vector with the cluster basis $(V_t)_{t\in\ctI}$, we can handle
leaves $t\in\lfy$ directly by adding $\bar y_t$ to the corresponding
coefficient vector $\hat y_t$.
If $t\in\cty\setminus\lfy$ is not a leaf, we can use (\ref{eq:nested})
again to find
\begin{align*}
  (V_t \bar y_t)|_{\hat t'} + V_{t'} \bar y_{t'}
  &= V_t|_{\hat t'\times k} \bar y_t + V_{t'} \bar y_{t'}
   = V_{t'} (E_{V,t'} \bar y_t + \bar y_{t'}) &
  &\text{ for all } t'\in\sons(\cty,t),
\end{align*}
i.e., instead of adding $V_t \bar y_t$ to $y|_{\hat t}$ directly, we
can also add $E_{V,t'} \bar y_t$ to $\bar y_{t'}$ and handle the son
clusters $t'\in\sons(\cty,t)$ by recursion.
The resulting algorithm is called the \emph{backward transformation}
and is given as the procedure \texttt{backward} in Figure~\ref{fi:forward}.

%
%
\begin{figure}
  \begin{minipage}[t]{0.45\textwidth}
  \begin{tabbing}
    \textbf{procedure} \texttt{forward}($s$, $x$, \textbf{var} $(\bar x_s)_{s\in\ctx}$);\\
    \textbf{if} $\sons(\ctx,s)=\emptyset$ \textbf{then}\\
    \quad\= $\bar x_s \gets D_s \hat x_s$\\
    \textbf{else begin}\\
    \> $\bar x_s \gets 0$;\\
    \> \textbf{for} $s'\in\sons(\ctx,s)$ \textbf{do begin}\\
    \> \quad\= \texttt{forward}($s'$, $x$, $(\bar x_s)_{s\in\ctx}$);\\
    \> \> $\bar x_s \gets \bar x_s + E_{W,s'}^* \bar x_{s'}$\\
    \> \textbf{end}\\
    \textbf{end}
  \end{tabbing}
  \end{minipage}%
  \hfill%
  \begin{minipage}[t]{0.45\textwidth}
  \begin{tabbing}
    \textbf{procedure} \texttt{backward}($t$, \textbf{var} $(\bar y_t)_{t\in\cty}$, $y$);\\
    \textbf{if} $\sons(\cty,t)=\emptyset$ \textbf{then}\\
    \quad\= $\hat y_t \gets \hat y_t + \bar y_t$\\
    \textbf{else}\\
    \> \textbf{for} $t'\in\sons(\cty,t)$ \textbf{do begin}\\
    \> \quad\= $\bar y_{t'} \gets \bar y_{t'} + E_{V,t'} \bar y_t$;\\
    \> \> \texttt{backward}($t'$, $(\bar y_t)_{t\in\cty}$, $y$)\\
    \> \textbf{end}
  \end{tabbing}
  \end{minipage}
  \caption{Perform forward and backward transformations for
           hierarchical vectors}
  \label{fi:forward}
\end{figure}

While the cases~\ref{it:mvm_leaf} and \ref{it:mvm_recursion}
can be handled essentially as in the case of standard
$\mathcal{H}^2$-matrices, the case~\ref{it:mvm_nonleaf} requires
special treatment:
if we encounter a leaf $s\in\lfx$ and a block $(t,s)\in\ctIJ$ that
is not a leaf of the block tree, we cannot afford to subdivide
$s$ further, since we are aiming for an algorithm with only
$\mathcal{O}((k^2+k_A^2) \#\ctx)$ operations.
We have to find a way to add
\begin{equation*}
  A|_{\hat t\times\hat s} x|_{\hat s}
  = A|_{\hat t\times\hat s} Q_s \hat x_s
\end{equation*}
to the subvector $y|_{\hat t}$ of the result $y$.

We can face this challenge by using \emph{induced cluster bases}
\cite[Section~7.8]{BO10}:
instead of representing the result $y$ in the row cluster basis
$(V_t)_{t\in\ctI}$, we use a cluster basis that also contains
the products $A|_{\hat t\times\hat s} Q_s\in\bbbr^{\hat t\times k}$ for
all $t\in\ctI$ with $(t,s)\in\ctIJ\setminus\lfIJ$.

To define the induced cluster basis, we introduce
\begin{align}\label{eq:sblockrow}
  \sblockrow(t) &:= \{ s\in\ctJ\ :\ (t,s)\in\ctIJ\setminus\lfIJ \} &
  &\text{ for all } t\in\ctI.
\end{align}
This set contains all column clusters $s\in\ctJ$ that appear in non-leaf
blocks with the row cluster $t$.
These are the blocks that have to be handled by the induced basis.
We let $\beta_t := \#\sblockrow(t)$ and fix
$s_{t,1},\ldots,s_{t,\beta_t}\in\ctJ$ such that
\begin{align*}
  \sblockrow(t) &= \{ s_{t,1},\ldots,s_{t,\beta_t} \} &
  &\text{ for all } t\in\ctI.
\end{align*}

%
%
\begin{definition}[Induced cluster basis]
Let $A\in\bbbr^{\Idx\times\Jdx}$ be an $\mathcal{H}^2$-matrix for
$\ctIJ$ with row cluster basis $(V_t)_{t\in\ctI}$, and let
$(Q_s)_{s\in\ctJ}$ be another cluster basis.

We define $\ell_t := k_A + k \beta_t$ and
\begin{align}\label{eq:induced_def}
  U_t &:= \begin{pmatrix}
    V_t & A|_{\hat t\times\hat s_{t,1}} Q_{s_{t,1}} &
    \ldots & A|_{\hat t\times\hat s_{t,\beta_t}} Q_{s_{t,\beta_t}}
  \end{pmatrix} \in\bbbr^{\hat t\times\ell_t} &
  &\text{ for all } t\in\ctI
\end{align}
and call $(U_t)_{t\in\ctI}$ the \emph{induced cluster basis} corresponding
to the $\mathcal{H}^2$-matrix $A$ and the \emph{input cluster basis}
$(Q_s)_{s\in\ctJ}$.
\end{definition}

%
%
\begin{remark}[Nested]
Calling the induced cluster basis $(U_t)_{t\in\ctI}$ a cluster basis
is justified, since it is nested \cite[Lemma~7.22]{BO10}, i.e., it
satisfies (\ref{eq:nested}) for suitable transfer matrices
$E_{U,t'}\in\bbbr^{\ell_{t'}\times\ell_t}$.
\end{remark}

If we use the induced cluster basis to represent the result $y$,
case~\ref{it:mvm_nonleaf} can be handled by simply adding $\hat x_s$
to the appropriate portion of the corresponding coefficient vector.
To keep the notation simple, we denote these coefficient vectors
by
\begin{align*}
  \hat y_{U,t} &= \begin{pmatrix}
    \hat y_t\\
    \hat y_{t,s_{t,1}}\\
    \vdots\\
    \hat y_{t,s_{t,\beta_t}}
  \end{pmatrix}\in\bbbr^{\ell_t}, &
  \bar y_{U,t} &= \begin{pmatrix}
    \bar y_t\\
    \bar y_{t,s_{t,1}}\\
    \vdots\\
    \bar y_{t,s_{t,\beta_t}}
  \end{pmatrix}\in\bbbr^{\ell_t}.
\end{align*}
Now the cases~\ref{it:mvm_leaf} and \ref{it:mvm_nonleaf} can be
handled almost in the same way.
Figure~\ref{fi:coupling} summarizes the recursive procedure
\texttt{coupling} that handles all three cases.

%
%
\begin{figure}
  \begin{tabbing}
    \textbf{procedure} \texttt{coupling}($b=(t,s)$, $x$, $(\bar x_s)_{s\in\ctx}$,
                                   \textbf{var} $y$, $(\bar y_t)_{t\in\cty}$);\\
    \textbf{if} $\sons(\ctIJ,b)=\emptyset$ \textbf{then}\\
    \quad\= $\bar y_t \gets \bar y_t + S_b \bar x_s$\\
    \textbf{else if} $\sons(\ctx,s)=\emptyset$ \textbf{then}\\
    \> $\bar y_{t,s} \gets \bar y_{t,s} + \hat x_s$\\
    \textbf{else begin}\\
    \> \textbf{if} $\sons(\cty,t)=\emptyset$ \textbf{then begin}\\
    \> \quad\= \texttt{refine}($t$, $y$);\\
    \> \> \textbf{for} $t'\in\sons(\ctI,t)$ \textbf{do}
            $\bar y_{U,t'} \gets 0$\\
    \> \textbf{end};\\
    \> \textbf{for} $t'\in\sons(\ctI,t)$, $s'\in\sons(\ctJ,s)$ \textbf{do}\\
    \> \> \texttt{coupling}($b'=(t',s')$, $x$, $(\bar x_s)_{s\in\ctx}$,
                         $y$, $(\bar y_t)_{t\in\cty}$)\\
    \textbf{end}
  \end{tabbing}
  \caption{Evaluating all couplings between row and column clusters}
  \label{fi:coupling}
\end{figure}

To complete our algorithm for the matrix-vector multiplication,
we require the backward transformation for the induced cluster
basis.
In order to generalize the algorithm \texttt{backward} given in
Figure~\ref{fi:forward}, we require an understanding of how the
transfer matrices for the induced cluster basis act on vectors.

In order to keep the presentation simple, we again assume that
the cluster tree $\ctJ$ is binary and that for each cluster
$s\in\ctJ\setminus\lfJ$ its sons are given by $\sons(\ctJ,s)=\{s_1,s_2\}$.

The first block of (\ref{eq:induced_def}) is straightforward:
for $t\in\ctI$ and $t'\in\sons(\ctI,t)$, we have
\begin{equation*}
  V_t|_{\hat t'\times k} = V_{t'} E_{t'}
\end{equation*}
by (\ref{eq:nested}).
The following blocks are a little more involved.
Let $s\in\sblockrow(t)$.
The definition (\ref{eq:sblockrow}) yields $b:=(t,s)\in\ctIJ\setminus\lfIJ$,
and with Definition~\ref{de:block_tree} we obtain
$\sons(\ctIJ,b)=\sons(t)\times\sons(s)$.
Restricting to $t'$ and using (\ref{eq:nested}) gives us
\begin{align*}
  (A|_{\hat t\times\hat s} Q_s)|_{\hat t'\times k}
  &= A|_{\hat t'\times\hat s} Q_s
   = \begin{pmatrix}
       A|_{\hat t'\times\hat s_1} & A|_{\hat t'\times\hat s_2}
     \end{pmatrix}
     \begin{pmatrix}
       Q_{s_1} F_{s_1}\\
       Q_{s_2} F_{s_2}
     \end{pmatrix}\\
  &= \begin{pmatrix}
       A|_{\hat t'\times\hat s_1} Q_{s_1} & A|_{\hat t'\times\hat s_2} Q_{s_2}
     \end{pmatrix}
     \begin{pmatrix}
       F_{s_1}\\
       F_{s_2}
     \end{pmatrix}.
\end{align*}
Let $s'\in\sons(\ctJ,s)$.
If $b':=(t',s')$ is a leaf of $\ctIJ$, we have
\begin{equation*}
  A|_{\hat t'\times\hat s'} Q_{s'} F_{s'} \bar y_{t,s}
  = V_{t'} S_{b'} W_{s'}^* Q_{s'} F_{s'} \bar y_{t,s}
  = V_{t'} S_{b'} D_{s'} F_{s'} \bar y_{t,s},
\end{equation*}
and we can express the product by the first block in
(\ref{eq:induced_def}).

On the other hand, if $b':=(t',s')$ is not a leaf of $\ctIJ$,
we have $s'\in\sblockrow(t')$ by definition and can express the
product $A|_{\hat t'\times\hat s'} Q_{s'} F_{s'}$ using one of the other
blocks in (\ref{eq:induced_def}).
The resulting backward transformation for the induced cluster
basis is summarized as the procedure \texttt{induced\_backward}
in Figure~\ref{fi:induced_backward}.

%
%
\begin{figure}
  \begin{tabbing}
    \textbf{procedure} \texttt{induced\_backward}($t$,
            \textbf{var} $(\bar y_{U,t})_{t\in\cty}$, $y$);\\
    \textbf{if} $\sons(\cty,t)=\emptyset$ \textbf{then}\\
    \quad\= $\hat y_{U,t} \gets \hat y_{U,t} + \bar y_{U,t}$\\
    \textbf{else}\\
    \> \textbf{for} $t'\in\sons(\cty,t)$ \textbf{do begin}\\
    \> \quad\= $\bar y_{t'} \gets \bar y_{t'} + E_{V,t'} \bar y_t$;\\
    \> \> \textbf{for} $s\in\sblockrow(t)$, $s'\in\sons(\ctJ,s)$ \textbf{do begin}\\
    \> \> \quad\= $b' \gets (t',s')$;\\
    \> \> \> \textbf{if} $\sons(\ctIJ,b')=\emptyset$ \textbf{then}\\
    \> \> \> \quad\= $\bar y_{t'} \gets \bar y_{t'} + S_{b'} D_{s'} F_{s'}
                        \bar y_{t,s}$\\
    \> \> \> \textbf{else}\\
    \> \> \> \> $\bar y_{t',s'} \gets \bar y_{t',s'} + F_{s'} \bar y_{t,s}$\\
    \> \> \textbf{end};\\
    \> \> \texttt{induced\_backward}($t'$, $(\bar y_{U,t})_{t\in\cty}$, $y$)\\
    \> \textbf{end}
  \end{tabbing}
  \caption{Backward transformation for the induced cluster basis}
  \label{fi:induced_backward}
\end{figure}

%
%
\begin{figure}
  \begin{tabbing}
    \textbf{procedure} \texttt{eval}($A$, $x$, \textbf{var} $y$);\\
    $r_\Idx \gets \treeroot(\ctI)$;\quad
    $r_\Jdx \gets \treeroot(\ctJ)$;\\
    Let $\cty$ be the minimal subtree of $\ctI$ containing only $r_\Idx$;\\
    $\bar y_{U,r_\Idx}\gets 0$;\quad
    $\hat y_{U,r_\Idx}\gets 0$;\\
    \texttt{forward}($r_\Jdx$, $x$, $(\bar x_s)_{s\in\ctx}$);\\
    \texttt{coupling}($b=(r_\Idx,r_\Jdx)$, $x$, $(\bar x_s)_{s\in\ctx}$,
                   $y$, $(\bar y_t)_{t\in\cty}$);\\
    \texttt{induced\_backward}($r_\Idx$, $(\bar y_{U,t})_{t\in\cty}$, $y$)
  \end{tabbing}
  \caption{Matrix-vector multiplication, result represented in
           the induced cluster basis}
  \label{fi:eval}
\end{figure}

Combining the forward transformation given in Figure~\ref{fi:forward},
the coupling step in Figure~\ref{fi:coupling}, and the backward
transformation for the induced basis in Figure~\ref{fi:induced_backward}
yields the matrix-vector multiplication algorithm given in
Figure~\ref{fi:eval}.

Our goal is now to prove that this algorithm requires not more than
$\mathcal{O}((k_A^2 + k^2) \#\ctx)$ operations.
In order to establish a connection between the number of clusters and
the number of blocks, we require a standard assumption:
the block tree has to be \emph{sparse} \cite{HAKH00,GRHA02}.

%
%
\begin{definition}[Sparse]
Let $\ctIJ$ be a block tree for $\ctI$ and $\ctJ$.
We define
\begin{align*}
  \blockrow(\ctIJ,t) &:= \{ s\in\ctJ\ :\ (t,s)\in\ctIJ \} &
  &\text{ for all } t\in\ctI,\\
  \blockcol(\ctIJ,s) &:= \{ t\in\ctI\ :\ (t,s)\in\ctIJ \} &
  &\text{ for all } s\in\ctJ.
\end{align*}
Let $C_\textrm{sp}\in\bbbn$.
A block tree $\ctIJ$ is called \emph{$C_\textrm{sp}$-sparse} if
\begin{align*}
  \#\blockrow(\ctIJ,t) &\leq C_\textrm{sp}, &
  \#\blockcol(\ctIJ,s) &\leq C_\textrm{sp} &
  &\text{ for all } t\in\ctI,\ s\in\ctJ.
\end{align*}
\end{definition}

%
%
\begin{lemma}[Forward transformation]
\label{le:forward}
The forward transformation \texttt{forward} given in Figure~\ref{fi:forward}
requires $\mathcal{O}(k_A (k_A+k) \#\ctx)$ operations.
\end{lemma}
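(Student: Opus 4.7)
The plan is to split the work done by \texttt{forward} into what happens at the leaves of $\ctx$ and what happens at the internal clusters, bound each contribution separately, and sum up using the tree structure.

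First I would look at a leaf $s\in\lfx$. There the procedure performs only the single matrix-vector product $\bar x_s \gets D_s \hat x_s$ with $D_s\in\bbbr^{k_A\times k}$ and $\hat x_s\in\bbbr^k$, so the cost is $\mathcal{O}(k_A k)$. Summed over all leaves, this contributes $\mathcal{O}(k_A k \,\#\lfx)$ operations. Since $\#\lfx\le\#\ctx$, this is dominated by $\mathcal{O}(k_A k \,\#\ctx)$.

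Next I would handle a non-leaf $s\in\ctx\setminus\lfx$. The local work consists of initialising $\bar x_s\in\bbbr^{k_A}$ (cost $\mathcal{O}(k_A)$) and, for each son $s'\in\sons(\ctx,s)$, updating $\bar x_s \gets \bar x_s + E_{W,s'}^* \bar x_{s'}$ with $E_{W,s'}^*\in\bbbr^{k_A\times k_A}$, which costs $\mathcal{O}(k_A^2)$ per son. Thus the total non-leaf cost is bounded by
\begin{equation*}
  \sum_{s\in\ctx\setminus\lfx} \mathcal{O}(k_A^2\,\#\sons(\ctx,s)).
\end{equation*}
The key combinatorial step is to observe that every cluster in $\ctx$ except the root is counted exactly once as a son of its (unique) father, so
\begin{equation*}
  \sum_{s\in\ctx\setminus\lfx} \#\sons(\ctx,s) = \#\ctx - 1 \le \#\ctx,
\end{equation*}
giving a non-leaf cost of $\mathcal{O}(k_A^2\,\#\ctx)$.

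Combining the two bounds yields $\mathcal{O}(k_A k\,\#\ctx + k_A^2\,\#\ctx) = \mathcal{O}(k_A(k_A+k)\,\#\ctx)$, which is the claim. There is no real obstacle here: the only point that needs care is the accounting of son relationships, and this is handled by the standard observation that a tree has one fewer edge than vertex. I would explicitly note that the cost of setting up the matrices $D_s$ via (\ref{eq:product2}) is not part of this lemma, since the procedure \texttt{forward} assumes those matrices are already available.
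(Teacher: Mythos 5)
Your proof is correct and follows essentially the same route as the paper's: you bound the leaf cost by $\mathcal{O}(k_A k)$ per leaf and the non-leaf cost by $\mathcal{O}(k_A^2)$ per son, then use the fact that each cluster has a unique father (equivalently, the tree has one fewer edge than vertex) to charge the transfer-matrix multiplications to the sons and obtain $\mathcal{O}(k_A(k_A+k)\,\#\ctx)$. Your explicit remark that the precomputation of the $D_s$ via \eqref{eq:product2} is outside the scope of this lemma is a helpful clarification that the paper leaves implicit.
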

\begin{proof}
We first note that the function \texttt{forward} is only called for
clusters $s\in\ctx$.

If $s$ is a leaf, the multiplication by $D_s$ requires
$2 k_A k$ operations.

If $s$ is not a leaf, the function performs a multiplication
by $E_{s'}$ for each of the sons $s'\in\sons(\ctx,s)$.
This takes $2 k_A^2$ operations.

Since each cluster has at most one father, not more than
$2 k_A (k_A+k)$ operations are required for each cluster.
\end{proof}

%
%
\begin{lemma}[Coupling step]
\label{le:coupling}
Let $\ctIJ$ be $C_\textrm{sp}$-sparse.

The coupling step \texttt{coupling} given in Figure~\ref{fi:coupling}
requires $\mathcal{O}(C_\textrm{sp} k_A (k_A+k) \#\ctx)$ operations.

If $\cty$ is the minimal subtree of $\ctI$ containing only the root
prior to calling \texttt{coupling}, it will satisfy
\begin{equation*}
  \#\cty \leq C_\textrm{sp} \#\ctx
\end{equation*}
after completion of the algorithm.
\end{lemma}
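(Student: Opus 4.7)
The plan is to establish two facts about the execution of \texttt{coupling}, namely that every recursive call is made on a block $b=(t,s)\in\ctIJ$ with $s\in\ctx$, and that every $t\in\cty$ is the row cluster of such a block. Both the operation count and the bound on $\#\cty$ then follow from $C_\textrm{sp}$-sparsity.

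First I would show by induction on the recursion depth that every call to \texttt{coupling} has $s\in\ctx$. The top-level call uses $s=\treeroot(\ctJ)=\treeroot(\ctx)\in\ctx$, and the only recursive descent occurs in case~3, where $\sons(\ctx,s)\neq\emptyset$ forces $\sons(\ctJ,s)=\sons(\ctx,s)\subseteq\ctx$ by the subtree property. Consequently the total number of calls is bounded by
\[
\#\bigl\{(t,s)\in\ctIJ : s\in\ctx\bigr\}
\;=\;\sum_{s\in\ctx}\#\blockcol(\ctIJ,s)
\;\le\;C_\textrm{sp}\,\#\ctx.
\]

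Next I would bound the work per call. Case~1 contributes $\mathcal{O}(k_A^2)$ for the multiplication by $S_b\in\bbbr^{k_A\times k_A}$; case~2 contributes $\mathcal{O}(k)$; case~3 contributes the one-time \texttt{refine}($t$,$y$) together with the initializations $\bar y_{U,t'}\gets 0$, each of which touches a vector of dimension $\ell_{t'}=k_A+k\beta_{t'}\le k_A+C_\textrm{sp} k$. Because during the coupling phase only the accumulators $\bar y_{U,t}$ are updated (the coefficient vectors $\hat y_{U,t}$ are touched only later by \texttt{induced\_backward}), the refinement reduces to a purely structural insertion of zero coefficients and costs $\mathcal{O}(C_\textrm{sn}(k_A+C_\textrm{sp} k))$ per refined cluster. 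Combining the per-call bounds with the number of calls gives the claimed complexity $\mathcal{O}(C_\textrm{sp}\,k_A(k_A+k)\,\#\ctx)$.

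For the size bound on $\cty$, I would argue that every $t\in\cty$ appears as the row cluster of some visited block. The root $r_\Idx$ is covered by the initial block $(r_\Idx,\treeroot(\ctJ))$. Any other $t\in\cty$ was inserted during a \texttt{refine}($t^*$,$y$) inside case~3 of a call on $(t^*,s^*)$, after which the recursion immediately issues \texttt{coupling} on $(t,s')$ for every $s'\in\sons(\ctJ,s^*)=\sons(\ctx,s^*)\subseteq\ctx$. Hence $(t,s')\in\ctIJ$ with $s'\in\ctx$ is a visited block, and we obtain
\[
\#\cty\;\le\;\#\bigl\{(t,s)\in\ctIJ:s\in\ctx\bigr\}\;\le\;C_\textrm{sp}\,\#\ctx.
\]

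The principal obstacle I anticipate is the accounting for the \texttt{refine} step in case~3: since $y$ lives in the induced cluster basis with transfer matrices of dimension up to $(k_A+C_\textrm{sp} k)\times(k_A+C_\textrm{sp} k)$, a naive reading of Figure~\ref{fi:refine} would inflate the constant. The observation that makes this clean is that $\hat y_{U,t}=0$ is maintained throughout the coupling phase, so \texttt{refine} never actually multiplies anything and the cost collapses to mere zero-initialization.
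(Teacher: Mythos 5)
Your argument follows the same route as the paper: establish by induction on the recursion that every call to \texttt{coupling} has $b=(t,s)\in\ctIJ$ with $s\in\ctx$, bound the number of such blocks by $\sum_{s\in\ctx}\#\blockcol(\ctIJ,s)\le C_\textrm{sp}\#\ctx$, attach an $\mathcal{O}(k_A(k_A+k))$ per-call cost for cases~1 and~2, and then for the $\cty$ bound observe that every cluster entering $\cty$ is the row cluster of a visited block, so $\cty\subseteq\bigcup_{s\in\ctx}\blockcol(\ctIJ,s)$. This is exactly the paper's proof.

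Where you go further is in accounting for the \texttt{refine} step in case~3 --- the paper's operation count simply lists the $S_b$-multiplication and the $\hat x_s$-addition and is silent about the cost of extending $\cty$. Your observation that $\hat y_{U,t}$ remains identically zero throughout \texttt{coupling} (it is initialized to zero in \texttt{eval} and only \texttt{induced\_backward} later writes to it), so that \texttt{refine} never performs an actual $\ell_t\times\ell_{t'}$ matrix-vector product and collapses to a structural insertion plus zero-initialization, is a genuinely useful clarification: without it, a naive reading of Figure~\ref{fi:refine} applied to the induced basis would incur $\mathcal{O}(C_\textrm{sn}(k_A+C_\textrm{sp} k)^2)$ per refined cluster, and the stated bound would need an extra factor of $C_\textrm{sp}$. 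One small imprecision: summing your zero-initialization cost $\mathcal{O}(C_\textrm{sn}(k_A+C_\textrm{sp} k))$ over the at most $\#\cty\le C_\textrm{sp}\#\ctx$ refines gives a term of order $C_\textrm{sn} C_\textrm{sp}^2 k\#\ctx$, which is not literally dominated by $C_\textrm{sp} k_A(k_A+k)\#\ctx$ unless one treats $C_\textrm{sn}$ and $C_\textrm{sp}$ as absolute constants; the paper implicitly does so, and your argument inherits the same (benign) looseness.
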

\begin{proof}
The procedure \texttt{coupling} is only called recursively if
$\sons(\ctIJ,b)\neq\emptyset$ and $\sons(\ctx,s)\neq\emptyset$
hold.
In this case, we have $\sons(\ctIJ,b)=\sons(\ctI,t)\times\sons(\ctJ,s)$
and $\sons(\ctx,s)=\sons(\ctJ,s)$ by definition, and therefore
$(t',s')\in\ctIJ$ and $s'\in\cty$ for all $t'\in\sons(\ctI)$ and
$s'\in\sons(\ctJ)$.
Since \texttt{coupling} is first called with $t=\treeroot(\ctI)$ and
$s=\treeroot(\ctJ)$, we can guarantee that for each call to
\texttt{coupling} we have $b=(t,s)\in\ctIJ$ and $s\in\ctx$.

This implies $t\in\blockcol(\ctIJ,s)$, and since the block tree $\ctIJ$
is $C_\textrm{sp}$-sparse, we have $\#\blockcol(\ctIJ,s)\leq C_\textrm{sp}$.

In each call to \texttt{coupling}, we perform either $2 k_A^2$ operations
to multiply $\bar x_s$ by $S_b$, or $k \leq 2 k_A k$ operations to add $\hat x_s$
to $\bar y_{t,s}$.

We conclude that the total number of arithmetic operations is
bounded by
\begin{equation*}
  \sum_{s\in\ctx} \sum_{t\in\blockcol(\ctIJ,s)} 2 k_A (k_A+k)
  \leq \sum_{s\in\ctx} 2 C_\textrm{sp} k_A (k_A+k)
  = 2 C_\textrm{sp} k_A (k_A+k) \#\ctx.
\end{equation*}
The procedure \texttt{refine} is only called to extend the tree $\cty$ if
$\sons(\ctIJ,b)\neq\emptyset$ and $\sons(\ctx,s)\neq\emptyset$.
We have already seen that in this case we have $(t',s')\in\ctIJ$
and $s'\in\ctx$ for all $t'\in\sons(\ctI,t)$ and $s'\in\sons(\ctJ,s)$.
In particular, for each $t'\in\sons(\ctI,t)$ added by \texttt{refine},
we can find a cluster $s'\in\sons(\ctx,s)\subseteq\ctx$ with
$t'\in\blockcol(\ctIJ,s')$.

Since we start the procedure with a minimal subtree $\cty$ containing
only the root of $\ctI$, we can conclude that $t\in\cty$ implies
$t\in\blockcol(\ctIJ,s)$ for an $s\in\ctx$.
The $C_\textrm{sp}$-sparsity of $\ctIJ$ yields
\begin{align*}
  \cty &\subseteq \bigcup_{s\in\ctx} \blockcol(\ctIJ,s), &
  \#\cty &\leq \sum_{s\in\ctx} \#\blockcol(\ctIJ,s) \leq C_\textrm{sp} \#\ctx.
\end{align*}
\end{proof}

%
%
\begin{lemma}[Backward transformation]
\label{le:backward}
Let $\ctIJ$ be $C_\textrm{sp}$-sparse.

The ranks of the induced cluster basis are bounded by
\begin{align*}
  \ell_t &\leq k_A + C_\textrm{sp} k &
  &\text{ for all } t\in\ctI,
\end{align*}
and the backward transformation \texttt{induced\_backward} for it
given in Figure~\ref{fi:induced_backward} requires
$\mathcal{O}(C_\textrm{sp} (k_A^2 + k^2) \#\cty)$ operations.
\end{lemma}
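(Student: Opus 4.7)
The plan is to handle the two assertions separately and then combine them.

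For the rank bound, I would argue directly from the definition $\ell_t = k_A + k\beta_t$ with $\beta_t = \#\sblockrow(t)$. Because $\sblockrow(t) = \{s\in\ctJ : (t,s)\in\ctIJ\setminus\lfIJ\} \subseteq \blockrow(\ctIJ,t)$ and the block tree is assumed $C_\textrm{sp}$-sparse, we immediately get $\beta_t \leq \#\blockrow(\ctIJ,t) \leq C_\textrm{sp}$, hence $\ell_t \leq k_A + C_\textrm{sp} k$. This is the key structural fact I will lean on throughout the complexity estimate, since every vector manipulated by \texttt{induced\_backward} lives in $\bbbr^{\ell_t}$ for some $t$.

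For the complexity, I would account for the cost of one invocation of \texttt{induced\_backward} at cluster $t$ and then sum over $t\in\cty$. The base case (when $t\in\lfy$) performs an addition of two vectors of length $\ell_t \leq k_A + C_\textrm{sp} k$, costing $\mathcal{O}(k_A + C_\textrm{sp} k)$. In the recursive case, for each son $t'\in\sons(\cty,t)$ the algorithm does (i) the transfer update $\bar y_{t'} \gets \bar y_{t'} + E_{V,t'} \bar y_t$, costing $\mathcal{O}(k_A^2)$, and (ii) for each pair $(s,s')$ with $s\in\sblockrow(t)$, $s'\in\sons(\ctJ,s)$, either the chained product $S_{b'} D_{s'} F_{s'} \bar y_{t,s}$ at cost $\mathcal{O}(k_A^2 + k_A k + k^2)$ (evaluated right-to-left so we multiply matrix times vector three times), or the simpler $F_{s'}\bar y_{t,s}$ at cost $\mathcal{O}(k^2)$. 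Using $k_A k \leq \frac{1}{2}(k_A^2+k^2)$ each such pair contributes $\mathcal{O}(k_A^2 + k^2)$.

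The number of such pairs is bounded by $\#\sblockrow(t)\cdot\max_s \#\sons(\ctJ,s) \leq C_\textrm{sp} \cdot C_\textrm{sn}$, so the work at $t$ (counting its sons) is $\mathcal{O}(C_\textrm{sp}(k_A^2+k^2)\cdot \#\sons(\cty,t))$, where I absorb the uniform son-count constant into the big-$\mathcal{O}$. Summing over $t\in\cty$ and using $\sum_{t\in\cty} \#\sons(\cty,t) \leq \#\cty$ (every cluster appears as a son at most once) yields the claimed bound $\mathcal{O}(C_\textrm{sp}(k_A^2 + k^2)\#\cty)$; the base-case contributions are of the same order and absorbed.

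The only subtle point I expect is bookkeeping the inner loop: one must be careful to read the algorithm so that the work for each son $t'$ is charged to $t'$ (not double-counted across $s\in\sblockrow(t)$), and to evaluate $S_{b'} D_{s'} F_{s'} \bar y_{t,s}$ as successive matrix-vector products rather than forming intermediate matrices. Once this accounting is in place, the rest is a routine summation using sparsity.
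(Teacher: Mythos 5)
Your rank-bound argument matches the paper exactly. Your complexity accounting is also structured the same way — charging the transfer update and the $(s,s')$ inner loop to each son $t'$ and summing over $\cty$ — and the per-operation costs you quote ($2k_A^2$ for the transfer, $\mathcal{O}(k_A^2+k_Ak+k^2)$ for the chained product, $\mathcal{O}(k^2)$ for the short branch) agree with the paper. The difference, and it is a real one, lies in how the inner loop over pairs $(s,s')$ with $s\in\sblockrow(t)$, $s'\in\sons(\ctJ,s)$ is counted.

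You bound the number of pairs by $\#\sblockrow(t)\cdot\max_s\#\sons(\ctJ,s)\leq C_\textrm{sp}C_\textrm{sn}$ and then absorb $C_\textrm{sn}$ into the $\mathcal{O}$. This is asymptotically fine if one is willing to treat the son-count constant as part of the implicit constant, but it does not actually reproduce the stated bound $\mathcal{O}(C_\textrm{sp}(k_A^2+k^2)\#\cty)$, which tracks the $C_\textrm{sp}$ dependence linearly and without any $C_\textrm{sn}$. The lemma's hypotheses do not include a bound on $\#\sons$, so you are quietly adding an assumption. (You could try to derive one from sparsity — if $(t,s)$ is a non-leaf block and $t'\in\sons(\ctI,t)$, then $\sons(\ctJ,s)\subseteq\blockrow(\ctIJ,t')$, so $\#\sons(\ctJ,s)\leq C_\textrm{sp}$ — but the product bound would then become $C_\textrm{sp}^2$, which is still not what is claimed.)

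The paper's counting is sharper and avoids this altogether: for a fixed son $t'$, any pair $(s,s')$ traversed in the inner loop satisfies $(t',s')\in\ctIJ$ by the block-tree structure, so $s'\in\blockrow(\ctIJ,t')$; and since each $s'$ has a unique father $s$ in $\ctJ$, the map $(s,s')\mapsto s'$ is injective. Hence the total number of pairs for a fixed $t'$ is at most $\#\blockrow(\ctIJ,t')\leq C_\textrm{sp}$ — linear in $C_\textrm{sp}$, with no product of constants and no son-count assumption. You should replace the product bound $\#\sblockrow(t)\cdot\max_s\#\sons(\ctJ,s)$ with this injection-into-$\blockrow(\ctIJ,t')$ argument; the rest of your proof then goes through unchanged and delivers exactly the claimed bound.
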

\begin{proof}
We first note that we have
\begin{align*}
  \#\sblockrow(\ctIJ,t)
  &\leq \#\blockrow(\ctIJ,t)
   \leq C_\textrm{sp} &
  &\text{ for all } t\in\ctI.
\end{align*}
By definition (\ref{eq:induced_def}), this implies
\begin{align*}
  \ell_t &\leq k_A + C_\textrm{sp} k &
  &\text{ for all } t\in\ctI.
\end{align*}
Let us now consider the number of operations required by
\texttt{induced\_backward} for a cluster $t\in\cty$.

If $t$ is a leaf, $\bar y_{U,t}$ is added to $\hat y_{U,t}$,
and due to (\ref{eq:induced_def}), this requires
\begin{equation*}
  k_A + k \#\sblockrow(t)
  \leq k_A + k \#\blockrow(t)
\end{equation*}
operations.

If $t$ is not a leaf, the multiplication of $\bar y_t$ with
$E_{V,t'}$ takes $2 k_A^2$ operations, and for all $s\in\sblockrow(t)$
and $s'\in\sons(\ctJ,s)$ we perform either one multiplication
with $F_{s'}$ or three multiplications with $S_{b'}$, $D_{s'}$, and
$F_{s'}$, so not more than $2 k_A^2 + 2 k_A k + 2 k^2 \leq 3 (k_A^2+k^2)$
operations are required.
Due to $t'\in\sons(\cty,t)$, $s\in\sblockrow(t)$ and $s'\in\sons(\ctJ,s)$,
we have $(t',s')\in\ctIJ$ and therefore $s'\in\blockrow(\ctIJ,t')$.
Since each $s'$ has only one father $s$, we conclude that not
more than
\begin{align*}
  2 k_A^2 + \sum_{s\in\sblockrow(\ctIJ,t)} \sum_{s'\in\sons(\ctJ,s)} 3 (k_A^2+k^2)
  &= 2 k_A^2
  + \sum_{s'\in\blockrow(\ctIJ,t')} 3 (k_A^2+k^2)\\
  &= 2 k_A^2 + 3 (k_A^2 + k^2) \#\blockrow(t')
\end{align*}
operations are required.
The total number of operations is now bounded by
\begin{align*}
  \sum_{t\in\cty} 2 k_A^2 + 3 (k_A^2 + k^2) \#\blockrow(t)
  &\leq \sum_{t\in\cty} 2 k_A^2 + 3 (k_A^2 + k^2) C_\textrm{sp}\\
  &\leq (2+3 C_\textrm{sp}) (k_A^2 + k^2) \#\cty.
\end{align*}
\end{proof}

%
%
\begin{theorem}[Complexity, matrix-vector multiplication]
\label{th:eval}
Let the blocktree $\ctIJ$ be $C_\textrm{sp}$-sparse.

The algorithm \texttt{eval} given in Figure~\ref{fi:eval} requires
$\mathcal{O}(C_\textrm{sp}^2 (k_A^2 + k^2) \#\ctx)$ operations.
\end{theorem}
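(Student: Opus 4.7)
The plan is essentially a bookkeeping proof: \texttt{eval} is a composition of three routines whose individual costs have been analyzed in Lemmas~\ref{le:forward}, \ref{le:coupling}, and \ref{le:backward}, so it suffices to add these up and express the result purely in terms of $\#\ctx$.

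First I would note that the initialization (setting up $r_\Idx$, $r_\Jdx$, and the minimal $\cty$) is $\mathcal{O}(1)$ and can be absorbed into constants. Then I would cite Lemma~\ref{le:forward} to bound the cost of the call to \texttt{forward} by $\mathcal{O}(k_A(k_A+k)\,\#\ctx)$, and Lemma~\ref{le:coupling} to bound the cost of the call to \texttt{coupling} by $\mathcal{O}(C_\textrm{sp} k_A(k_A+k)\,\#\ctx)$. The same lemma crucially also yields the structural bound $\#\cty \leq C_\textrm{sp} \#\ctx$ that the tree produced as a side effect of \texttt{coupling} satisfies.

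For the final step, Lemma~\ref{le:backward} gives a cost of $\mathcal{O}(C_\textrm{sp}(k_A^2+k^2)\,\#\cty)$ for \texttt{induced\_backward}, and substituting the bound on $\#\cty$ from Lemma~\ref{le:coupling} turns this into $\mathcal{O}(C_\textrm{sp}^2(k_A^2+k^2)\,\#\ctx)$. To combine the three contributions cleanly, I would use the elementary estimate $k_A(k_A+k) \leq k_A^2 + \tfrac12(k_A^2+k^2) \leq \tfrac32(k_A^2+k^2)$, which lets me absorb the first two terms into the bound of the third, giving the claimed total complexity $\mathcal{O}(C_\textrm{sp}^2(k_A^2+k^2)\,\#\ctx)$.

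There is no real obstacle here beyond careful accounting; the only subtlety is recognizing that the backward transformation's cost is tied to $\#\cty$ rather than $\#\ctx$, and that the estimate $\#\cty \leq C_\textrm{sp}\#\ctx$ from Lemma~\ref{le:coupling} is precisely what delivers the extra factor of $C_\textrm{sp}$ responsible for the $C_\textrm{sp}^2$ in the final bound. Everything else is a direct summation of previously established costs.
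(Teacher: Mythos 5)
Your proposal is correct and follows essentially the same route as the paper: invoke Lemmas~\ref{le:forward}, \ref{le:coupling}, and \ref{le:backward} for the three phases, use the bound $\#\cty\leq C_\textrm{sp}\#\ctx$ from Lemma~\ref{le:coupling} to convert the backward-transformation cost into one in terms of $\#\ctx$, and absorb the lower-order contributions. The only cosmetic difference is that you make the inequality $k_A(k_A+k)\leq\tfrac32(k_A^2+k^2)$ explicit where the paper simply writes the inclusion $\mathcal{O}(k_A(k_A+k)\,\#\ctx)\subseteq\mathcal{O}((k_A^2+k^2)\,\#\ctx)$.
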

\begin{proof}
Lemma~\ref{le:forward} yields that the forward transformation
requires $\mathcal{O}(k_A (k_A+k) \#\ctx)
\subseteq\mathcal{O}((k_A^2 + k^2) \#\ctx)$ operations.

Lemma~\ref{le:coupling} yields that the coupling step requires
$\mathcal{O}(C_\textrm{sp} k_A (k_A+k) \#\ctx)
\subseteq \mathcal{O}(C_\textrm{sp} (k_A^2 + k^2) \#\ctx)$ operations and
that $\cty$ will subsequently satisfy $\#\cty\leq C_\textrm{sp}\#\ctx$.

Lemma~\ref{le:backward} yields that the backward transformation
requires $\mathcal{O}(C_\textrm{sp} (k_A^2 + k^2) \#\cty)$ operations, and
due to the estimate $\#\cty\leq C_\textrm{sp} \#\ctx$, the number of operations
is also in $\mathcal{O}(C_\textrm{sp}^2 (k_A^2 + k^2) \#\ctx)$.

Setting up the initial vector $y$ requires no arithmetic
operations.
\end{proof}

\section{Adaptive conversion}

We have seen that the matrix-vector multiplication algorithm
presented in Figures~\ref{fi:forward}, \ref{fi:coupling},
\ref{fi:induced_backward} and \ref{fi:eval} computes the
exact result of the matrix vector multiplication $y=Ax$ for
a hierarchical vector $x$ in $\mathcal{O}(C_\textrm{sp}^2 (k_A^2+k^2) \#\ctx)$
operations.

Unfortunately, the algorithm yields a result that does not
use a cluster basis of our choosing, but the somewhat artificial
induced cluster basis.
This is particularly unde\-si\-ra\-ble since the rank of the induced
cluster basis can become quite large.

We address this problem by developing an algorithm that
approximates a given hierarchical vector by another hierarchical
vector using a different basis.

Let $x\in\bbbr^\Idx$ be a hierarchical vector corresponding
to a subtree $\ctx$ and a cluster basis $(V_t)_{t\in\ctI}$.
Our goal is to represent it by a hierarchical vector $y\in\bbbr^\Idx$
corresponding to a subtree $\cty$ and a second cluster basis
$(Q_t)_{t\in\ctI}$.
We have already seen that isometric cluster bases are useful
for purposes like this, so we assume that $(Q_t)_{t\in\ctI}$ is
isometric.

Consider a leaf $t\in\lfx$.
We have $x|_{\hat t} = V_t \hat x_t$, and we could simply employ
the orthogonal projection corresponding to $Q_t$ to obtain the
approximation
\begin{equation*}
  x|_{\hat t} = V_t \hat x_t \approx Q_t Q_t^* V_t \hat x_t,
\end{equation*}
but we are faced with the question if this approximation is
sufficiently accurate.

Assume that we know that it is not.
In this case, we split $t$ into its sons and try to approximate
the subvectors $x|_{\hat t'}$ by the projections
$Q_{t'} Q_{t'}^* x|_{\hat t'}$ corresponding to the sons $t'\in\sons(\ctI,t)$.
If the resulting errors are still too large, we keep splitting recursively
until we are satisfied.
Since we have (\ref{eq:nested}) at our disposal, this procedure can
be carried out efficiently and provides us with the required
hierarchical vector $y$ and subtree $\cty$.

Although $\cty$ is now guaranteed to be fine enough to satisfy our
accuracy requirements, it may be \emph{too} fine.
Fortunately, we have the procedure \texttt{coarsen}
(cf. Figure~\ref{fi:coarsen}) at our disposal to reduce the cluster
tree again while ensuring that the error stays below a given bound.

%
%
%
%
\begin{figure}
  \begin{minipage}[t]{0.45\textwidth}
  \begin{tabbing}
    \textbf{procedure} \texttt{convert\_leaf}($t$, $\hat x_t$, \textbf{var} $y$);\\
    \textbf{if} $\|V_t \hat x_t - Q_t Q_t^* V_t \hat x_t\|$
                          small enough \textbf{then}\\
    \quad\= $\hat y_t \gets Q_t^* V_t \hat x_t$\\
    \textbf{else begin}\\
    \> \texttt{refine}($t$, $y$);\\
    \> \textbf{for} $t'\in\sons(\ctI,t)$ \textbf{do begin}\\
    \> \quad\= $\hat x_{t'} \gets E_{t'} \hat x_t$;\\
    \> \> \texttt{convert\_leaf}($t'$, $\hat x_{t'}$, $y$)\\
    \> \textbf{end}\\
    \textbf{end}
  \end{tabbing}
  \end{minipage}%
  \hfill%
  \begin{minipage}[t]{0.45\textwidth}
  \begin{tabbing}
    \textbf{procedure} \texttt{convert}($t$, $x$, \textbf{var} $y$);\\
    \textbf{if} $\sons(\ctx,t)=\emptyset$ \textbf{then}\\
    \quad\= \texttt{convert\_leaf}($t$, $\hat x_t$, $y$)\\
    \textbf{end else begin}\\
    \> \textbf{for} $t'\in\sons(\ctx,t)$ \textbf{do}\\
    \> \quad\= \texttt{convert}($t'$, $x$, $y$);\\
    \> \textbf{if} $\|y|_{\hat t} - Q_t Q_t^* y|_{\hat t}\|$
                         small enough \textbf{then}\\
    \> \> \texttt{coarsen}($t$, $y$)\\
    \textbf{end}
  \end{tabbing}
  \end{minipage}
  \caption{Conversion from one cluster basis to another with
           adaptively constructed subtree}
  \label{fi:convert}
\end{figure}

The resulting algorithm \texttt{convert} is given in Figure~\ref{fi:convert},
but it is still incomplete:
how can the algorithm judge whether an approximation error is
``small enough''?

For the second case, i.e., the coarsening of the vector $y$
given in the isometric basis $(Q_t)_{t\in\ctI}$, we have already
solved this problem:
we can construct the auxiliary matrices $(P_t)_{t\in\ctI}$ introduced
in (\ref{eq:Pt_def}) and use (\ref{eq:coarsen_error}) to compute the
error norm explicitly and robustly.

We still have to consider the first case:
a leaf $t\in\ctx$ is given and we have to compute the projection
error
\begin{equation*}
  \| V_t \hat x_t - Q_t Q_t^* V_t \hat x_t \|.
\end{equation*}
We can solve this problem by generalizing the approach used for
the coarsening error:
we construct $k\times k$ matrices $(Z_t)_{t\in\ctI}$ and isometric
matrices $(P_t)_{t\in\ctI}$ such that
\begin{subequations}
\begin{align}
  V_t - Q_t Q_t^* V_t &= P_t Z_t &
  &\text{ for all } t\in\ctI,\label{eq:PZ_error}\\
  P_t^* P_t &= I &
  &\text{ for all } t\in\ctI,\label{eq:PZ_isometric}\\
  P_t^* Q_t &= 0 &
  &\text{ for all } t\in\ctI.\label{eq:PZ_perpendicular}
\end{align}
\end{subequations}
The first property (\ref{eq:PZ_error}) states that $P_t Z_t$ is
a representation of the projection error, the second property
(\ref{eq:PZ_isometric}) simply restates that $P_t$ is isometric.
The third property (\ref{eq:PZ_perpendicular}) is used in the
construction of the families $(Z_t)_{t\in\ctI}$ and $(P_t)_{t\in\ctI}$.

Combining (\ref{eq:PZ_error}) and (\ref{eq:PZ_isometric}) yields
\begin{align*}
  \|V_t \hat x_t - Q_t Q_t^* V_t \hat x_t\|_2
  &= \|P_t Z_t \hat x_t\|_2
   = \|Z_t \hat x_t\|_2 &
  &\text{ for all } t\in\ctI,\ \hat x_t\in\bbbr^k,
\end{align*}
and since $Z_t$ is small, we can evaluate the right-hand
side efficiently.

We construct the families $(Z_t)_{t\in\ctI}$ and $(P_t)_{t\in\ctI}$
by recursion.
Let $t\in\ctI$.

\paragraph{Leaf cluster.}
If $t$ is a leaf of $\ctI$, we have $V_t$ and $Q_t$ at our disposal.
We can use a Householder factorization to extend $Q_t$ to an
isometric matrix, i.e., to find an isometric matrix $\widetilde{P}_t$
such that
\begin{equation*}
  \widetilde{Q}_t
  := \begin{pmatrix} Q_t & \widetilde{P}_t \end{pmatrix}
         \in\bbbr^{\hat t\times\hat t}
\end{equation*}
is orthogonal, i.e., quadratic and isometric.
This means
\begin{equation*}
  V_t = \widetilde{Q}_t \widetilde{Q}_t^* V_t
  = \begin{pmatrix}
      Q_t & \widetilde{P}_t
    \end{pmatrix}
    \begin{pmatrix}
      Q_t^*\\ \widetilde{P}_t^*
    \end{pmatrix} V_t
  = Q_t Q_t^* V_t + \widetilde{P}_t \widetilde{P}_t^* V_t,
\end{equation*}
which is equivalent to
\begin{equation*}
  V_t - Q_t Q_t^* V_t = \widetilde{P}_t \widetilde{P}_t^* V_t.
\end{equation*}
If $k \ll \#\hat t$, the matrix $\widetilde{P}_t^* V_t$ will have
a large number of rows.
Since it has only $k$ columns, we use a thin Householder factorization
\begin{equation}\label{eq:Zt_leaf}
  \widetilde{P}_t^* V_t = \widehat{P}_t Z_t,
\end{equation}
where $Z_t$ is a $k\times k$ upper triangular matrix and $\widehat{P}_t$
is isometric.
We let
\begin{equation*}
  P_t := \widetilde{P}_t \widehat{P}_t
\end{equation*}
and observe
\begin{align*}
  V_t &= Q_t Q_t^* V_t + \widetilde{P}_t \widetilde{P}_t^* V_t
       = Q_t Q_t^* V_t + \widetilde{P}_t \widehat{P}_t Z_t
       = Q_t Q_t^* V_t + P_t Z_t,\\
  P_t^* P_t &= \widehat{P}_t^* \widetilde{P}_t^*
               \widetilde{P}_t \widehat{P}_t
             = \widehat{P}_t^* \widehat{P}_t = I,\\
  P_t^* Q_t &= \widehat{P}_t^* \widetilde{P}_t^* Q_t
             = \widehat{P}_t^* 0 = 0.
\end{align*}

\paragraph{Non-leaf cluster.}
If $t$ is not a leaf of $\ctI$, we cannot use $V_t$ and $Q_t$, since they
are only given implicitly via the corresponding transfer matrices.
For the sake of simplicity, we assume $\#\sons(t)=2$ and
$\sons(t)=\{t_1,t_2\}$.
We have
\begin{align*}
  V_t &= \begin{pmatrix}
    V_{t_1} E_{t_1}\\
    V_{t_2} E_{t_2}
  \end{pmatrix}, &
  Q_t &= \begin{pmatrix}
    Q_{t_1} F_{t_1}\\
    Q_{t_2} F_{t_2}
  \end{pmatrix}
  = \begin{pmatrix}
    Q_{t_1} & \\
    & Q_{t_2}
  \end{pmatrix} \widehat{Q}_t,
\end{align*}
where we let
\begin{equation*}
  \widehat{Q}_t := \begin{pmatrix}
    F_{t_1}\\
    F_{t_2}
  \end{pmatrix}.
\end{equation*}
Assuming that $P_{t_1}$, $Z_{t_1}$, $P_{t_2}$ and $Z_{t_2}$ have already
been computed by recursion, we find
\begin{align*}
  V_t - Q_t Q_t^* V_t
  &= \begin{pmatrix}
    V_{t_1} E_{t_1}\\
    V_{t_2} E_{t_2}
  \end{pmatrix}
  - \begin{pmatrix}
    Q_{t_1} & \\
    & Q_{t_2}
  \end{pmatrix} \widehat{Q}_t \widehat{Q}_t^*
  \begin{pmatrix}
    Q_{t_1}^* & \\
    & Q_{t_2}^*
  \end{pmatrix}
  \begin{pmatrix}
    V_{t_1} E_{t_1}\\
    V_{t_2} E_{t_2}
  \end{pmatrix}\\
  &= \begin{pmatrix}
    V_{t_1} E_{t_1}\\
    V_{t_2} E_{t_2}
  \end{pmatrix}
  - \begin{pmatrix}
    Q_{t_1} & \\
    & Q_{t_2}
  \end{pmatrix} \widehat{Q}_t \widehat{Q}_t^*
  \begin{pmatrix}
    Q_{t_1}^* V_{t_1} E_{t_1}\\
    Q_{t_2}^* V_{t_2} E_{t_2}
  \end{pmatrix}\\
  &= \begin{pmatrix}
    V_{t_1} E_{t_1}\\
    V_{t_2} E_{t_2}
  \end{pmatrix}
  - \begin{pmatrix}
    Q_{t_1} Q_{t_1}^* V_{t_1} E_{t_1}\\
    Q_{t_2} Q_{t_2}^* V_{t_2} E_{t_2}
  \end{pmatrix}\\
  &+ \begin{pmatrix}
    Q_{t_1} Q_{t_1}^* V_{t_1} E_{t_1}\\
    Q_{t_2} Q_{t_2}^* V_{t_2} E_{t_2}
  \end{pmatrix}
  - \begin{pmatrix}
      Q_{t_1} & \\
      & Q_{t_2}
  \end{pmatrix} \widehat{Q}_t \widehat{Q}_t^*
    \begin{pmatrix}
      Q_{t_1}^* V_{t_1} E_{t_1}\\
      Q_{t_2}^* V_{t_2} E_{t_2}
    \end{pmatrix}\\
  &= \begin{pmatrix}
    (V_{t_1} - Q_{t_1} Q_{t_1}^* V_{t_1}) E_{t_1} \\
    (V_{t_2} - Q_{t_2} Q_{t_2}^* V_{t_2}) E_{t_2}
  \end{pmatrix}\\
  &+ \begin{pmatrix}
    Q_{t_1} & \\
    & Q_{t_2}
  \end{pmatrix} \left[
    \begin{pmatrix}
      Q_{t_1}^* V_{t_1} E_{t_1}\\
      Q_{t_2}^* V_{t_2} E_{t_2}
    \end{pmatrix}
    - \widehat{Q}_t \widehat{Q}_t^*
    \begin{pmatrix}
      Q_{t_1}^* V_{t_1} E_{t_1}\\
      Q_{t_2}^* V_{t_2} E_{t_2}
    \end{pmatrix}\right].
\end{align*}
For the first term, our assumption yields
\begin{equation}\label{eq:Zt_sons}
  \begin{pmatrix}
    (V_{t_1} - Q_{t_1} Q_{t_1}^* V_{t_1}) E_{t_1}\\
    (V_{t_2} - Q_{t_2} Q_{t_2}^* V_{t_2}) E_{t_2}
  \end{pmatrix}
  = \begin{pmatrix}
    P_{t_1} Z_{t_1} E_{t_1}\\
    P_{t_2} Z_{t_2} E_{t_2}
  \end{pmatrix}.
\end{equation}
For the second term, we introduce
\begin{equation*}
  \widehat{V}_t := \begin{pmatrix}
    Q_{t_1}^* V_{t_1} E_{t_1}\\
    Q_{t_2}^* V_{t_2} E_{t_2}
  \end{pmatrix}
\end{equation*}
and obtain
\begin{equation*}
  \begin{pmatrix}
    Q_{t_1}^* V_{t_1}^* E_{t_1}\\
    Q_{t_2}^* V_{t_2}^* E_{t_2}
  \end{pmatrix}
  - \widehat{Q}_t \widehat{Q}_t^*
  \begin{pmatrix}
    Q_{t_1}^* V_{t_1}^* E_{t_1}\\
    Q_{t_2}^* V_{t_2}^* E_{t_2}
  \end{pmatrix}
  = \widehat{V}_t - \widehat{Q}_t \widehat{Q}_t^* \widehat{V}_t.
\end{equation*}
As before, we extend $\widehat{Q}_t$ to a square isometric matrix
\begin{equation*}
  \widetilde{Q}_t := \begin{pmatrix}
    \widehat{Q}_t & \widetilde{P}_t
  \end{pmatrix}
\end{equation*}
and obtain
\begin{equation}\label{eq:Zt_father}
  \widehat{V}_t - \widehat{Q}_t \widehat{Q}_t^* \widehat{V}_t
  = \widetilde{P}_t \widetilde{P}_t^* \widehat{V}_t.
\end{equation}
Combining the equations (\ref{eq:Zt_sons}) and (\ref{eq:Zt_father}) yields
\begin{align*}
  V_t - Q_t Q_t^* V_t
  &= \begin{pmatrix}
       P_{t_1} Z_{t_1} E_{t_1}\\
       P_{t_2} Z_{t_2} E_{t_2}
     \end{pmatrix}
     + \begin{pmatrix}
       Q_{t_1} & \\
       & Q_{t_2}
     \end{pmatrix} \widetilde{P}_t \widetilde{P}_t^* \widehat{V}_t\\
  &= \begin{pmatrix}
       P_{t_1} & & Q_{t_1} & \\
       & P_{t_2} & & Q_{t_2}
     \end{pmatrix}
     \begin{pmatrix}
       I & & \\
       & I & \\
       & & \widetilde{P}_t
     \end{pmatrix}
     \begin{pmatrix}
       Z_{t_1} E_{t_1}\\
       Z_{t_2} E_{t_2}\\
       \widetilde{P}_t^* \widehat{V}_t
     \end{pmatrix},
\end{align*}
where it is important to keep in mind that $\widetilde{P}_t$ has
$2k$ rows, so the dimensions of the first and second factor match.

To obtain the final result, we compute a thin Householder factorization
\begin{equation}\label{eq:Zt_nonleaf}
  \begin{pmatrix}
    Z_{t_1} E_{t_1}\\
    Z_{t_2} E_{t_2}\\
    \widetilde{P}^* \widehat{V}_t
  \end{pmatrix}
  = \widehat{P}_t Z_t
\end{equation}
with a $k\times k$ upper triangular matrix $Z_t$ and let
\begin{equation*}
  P_t := \begin{pmatrix}
    P_{t_1} & & Q_{t_1} & \\
    & P_{t_2} & & Q_{t_2}
  \end{pmatrix}
  \begin{pmatrix}
    I & & \\
    & I & \\
    & & \widetilde{P}_t
  \end{pmatrix} \widehat{P}_t.
\end{equation*}
Since it is a product of three isometric matrices, $P_t$ is also
isometric, so (\ref{eq:PZ_isometric}) holds.

By our construction, we have
\begin{equation*}
  V_t - Q_t Q_t^* V_t
  = \begin{pmatrix}
       P_{t_1} & & Q_{t_1} & \\
       & P_{t_2} & & Q_{t_2}
     \end{pmatrix}
     \begin{pmatrix}
       I & & \\
       & I & \\
       & & \widetilde{P}_t
     \end{pmatrix}
     \widehat{P}_t Z_t
  = P_t Z_t,
\end{equation*}
so (\ref{eq:PZ_error}) holds, too.

Since (\ref{eq:PZ_perpendicular}) holds for the sons $t_1$ and
$t_2$, we have
\begin{align*}
  P_t^* Q_t
  &= \widehat{P}_t^*
    \begin{pmatrix}
      I & & \\
      & I & \\
      & & \widetilde{P}_t^*
    \end{pmatrix}
    \begin{pmatrix}
      P_{t_1}^* & \\
      & P_{t_2}^* \\
      Q_{t_1}^* & \\
      & Q_{t_2}^*
    \end{pmatrix}
    \begin{pmatrix}
      Q_{t_1} F_{t_1}\\
      Q_{t_2} F_{t_2}
    \end{pmatrix}\\
  &= \widehat{P}_t^*
    \begin{pmatrix}
      I & & \\
      & I & \\
      & & \widetilde{P}_t^*
    \end{pmatrix}
    \begin{pmatrix}
      P_{t_1}^* Q_{t_1} F_{t_1}\\
      P_{t_2}^* Q_{t_2} F_{t_2}\\
      F_{t_1}\\
      F_{t_2}
    \end{pmatrix}
  = \widehat{P}_t^*
    \begin{pmatrix}
      I & & \\
      & I & \\
      & & \widetilde{P}_t^*
    \end{pmatrix}
    \begin{pmatrix}
      0\\
      0\\
      \widehat{Q}_t
    \end{pmatrix}.
\end{align*}
We have constructed $\widetilde{P}_t$ by extending $\widehat{Q}_t$
to an orthogonal basis, so we have $\widetilde{P}_t^* \widehat{Q}_t=0$ and
conclude
\begin{equation*}
  P_t^* Q_t
  = \widehat{P}_t^*
    \begin{pmatrix}
      I & & \\
      & I & \\
      & & \widetilde{P}_t^*
    \end{pmatrix}
    \begin{pmatrix}
      0\\
      0\\
      \widehat{Q}_t
    \end{pmatrix}
  = \widehat{P}_t^*
    \begin{pmatrix}
      0\\
      0\\
      0
    \end{pmatrix} = 0,
\end{equation*}
i.e., (\ref{eq:PZ_perpendicular}) holds also for $t$.
The construction is complete, and we have proven the following
result:

%
%
\begin{theorem}[Projection error]
The matrices $(Z_t)_{t\in\ctI}$ defined by (\ref{eq:Zt_leaf}) and
(\ref{eq:Zt_nonleaf}), respectively, satisfy
\begin{align*}
  \| V_t \hat x_t - Q_t Q_t^* V_t \hat x_t \|
  &= \| Z_t \hat x_t \| &
  &\text{ for all } t\in\ctI,\ \hat x_t\in\bbbr^k.
\end{align*}
\end{theorem}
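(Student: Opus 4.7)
The plan is to argue by induction on the cluster tree $\ctI$ that the constructed pair $(P_t,Z_t)$ satisfies the three auxiliary properties (\ref{eq:PZ_error}), (\ref{eq:PZ_isometric}) and (\ref{eq:PZ_perpendicular}) for every $t\in\ctI$, and then to read off the claimed norm identity as an immediate corollary. Concretely, once (\ref{eq:PZ_error}) and (\ref{eq:PZ_isometric}) are established, we will have
\begin{equation*}
  \|V_t\hat x_t - Q_t Q_t^* V_t \hat x_t\|
  = \|P_t Z_t \hat x_t\|
  = \|Z_t \hat x_t\|,
\end{equation*}
since applying an isometric matrix preserves the Euclidean norm. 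Property (\ref{eq:PZ_perpendicular}) is not needed for the final statement, but it has to be carried through the induction because it is used essentially in the non-leaf step.

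For the base case, take a leaf $t\in\lfI$. I would extend $Q_t$ to a square orthogonal matrix by appending an isometric $\widetilde P_t$, obtaining the decomposition $V_t = Q_t Q_t^* V_t + \widetilde P_t \widetilde P_t^* V_t$ written out in the excerpt. A thin Householder factorization (\ref{eq:Zt_leaf}) then yields $P_t := \widetilde P_t \widehat P_t$ and the three properties follow mechanically from $\widetilde P_t^* \widetilde P_t = I$, $\widehat P_t^* \widehat P_t = I$ and $\widetilde P_t^* Q_t = 0$, exactly as in the displayed calculations in the excerpt.

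For the inductive step, I assume that the three properties hold at both sons $t_1,t_2\in\sons(\ctI,t)$ (restricting to the binary case, with the general case requiring only notational changes). Using the nested representations of $V_t$ and $Q_t$ via the transfer matrices $E_{t_i},F_{t_i}$, I would split $V_t - Q_tQ_t^*V_t$ into two pieces: a block-diagonal contribution that, by the inductive hypothesis, equals the column-stacked matrix with blocks $P_{t_i}Z_{t_i}E_{t_i}$; and a residual $\widetilde P_t\widetilde P_t^*\widehat V_t$ arising from extending $\widehat Q_t$ to a square orthogonal matrix on $\bbbr^{2k}$. Stacking these blocks and applying the thin Householder factorization (\ref{eq:Zt_nonleaf}) produces $Z_t$ and the corresponding $P_t$ as a product of three isometric factors, giving (\ref{eq:PZ_error}) and (\ref{eq:PZ_isometric}) directly. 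The orthogonality (\ref{eq:PZ_perpendicular}) requires a short computation: using the inductive hypotheses $P_{t_i}^* Q_{t_i} = 0$ one sees that the only surviving contribution to $P_t^*Q_t$ is $\widetilde P_t^* \widehat Q_t$, which vanishes because $\widetilde P_t$ was constructed as the orthogonal complement of $\widehat Q_t$.

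The main obstacle, such as it is, is purely bookkeeping: making sure that the block decomposition of $V_t - Q_tQ_t^*V_t$ is aligned correctly with the factored form of $P_t$, so that the product $P_t Z_t$ really reproduces $V_t - Q_t Q_t^* V_t$ rather than some reshuffled version of it. All the arithmetic is already laid out in the excerpt, so the proof is essentially a verification that the equations (\ref{eq:Zt_leaf}) and (\ref{eq:Zt_nonleaf}), combined with the inductive hypothesis, yield the three claimed properties, from which the norm identity follows in one line.
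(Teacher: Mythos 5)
Your proposal is correct and follows essentially the same route as the paper: a structural induction over the cluster tree that carries the three auxiliary identities (\ref{eq:PZ_error}), (\ref{eq:PZ_isometric}) and (\ref{eq:PZ_perpendicular}), with the leaf case handled by extending $Q_t$ to a square orthogonal matrix and a thin Householder factorization of $\widetilde{P}_t^* V_t$, and the non-leaf case by splitting $V_t - Q_t Q_t^* V_t$ into the block-diagonal contribution $P_{t_i} Z_{t_i} E_{t_i}$ from the sons plus the residual coming from extending $\widehat{Q}_t$, before a final thin factorization produces $Z_t$. You also correctly identify that (\ref{eq:PZ_perpendicular}) must be carried through the induction even though the theorem's statement does not mention it, since the orthogonality $P_{t_i}^* Q_{t_i} = 0$ is exactly what makes the first factor of $P_t$ isometric in the non-leaf step.
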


%
%
\begin{remark}[Complexity]
\label{re:conversion_complexity}
The matrices $P_t$ appearing in our construction are only required
for the proof, but not for the practical algorithm.
As in Remark~\ref{re:coarsening_complexity}, we assume that there is
a constant $C_\textrm{qr}$ such that a Householder vector of dimension
$n\in\bbbn$ can be constructed and applied in not more than
$C_\textrm{qr} n$ operations.

For a leaf cluster $t\in\ctI$, we first apply $k$ Householder reflections
to construct $\widetilde{P}_t$.
This takes not more than $C_\textrm{qr} (\#\hat t) k^2$ operations.
Applying the reflections to compute the matrix $\widetilde{P}_t^* V_t$
takes $C_\textrm{qr} (\#\hat t) k^2$ operations.
This matrix has $(\#\hat t)-k$ rows and $k$ columns, so computing its
Householder factorization requires $C_\textrm{qr} (\#\hat t-k) k^2$ operations.
We obtain a total of
\begin{equation*}
  3 C_\textrm{qr} (\#\hat t) k^2
\end{equation*}
operations for leaf clusters.

For a non-leaf cluster $t\in\ctI$, the construction of $\widetilde{P}_t$
as a product of $k$ Householder reflections takes $C_\textrm{qr} (2k) k^2$
operations.
Setting up the matrix
\begin{equation*}
  \begin{pmatrix}
    Z_{t_1} E_{t_1}\\
    Z_{t_2} E_{t_2}\\
    \widetilde{P}_t^* \widehat{V}_t
  \end{pmatrix}
\end{equation*}
takes $2k^3$ operations for both the first and second block and
$C_\textrm{qr} (2k) k^2$ for the third.
The matrix has not more than $4k$ rows and $k$ columns, so the
final Householder factorization takes not more than
$C_\textrm{qr} (4k) k^2$ operations.
We obtain a total of
\begin{equation*}
  C_\textrm{qr} (2k+4k) k^2 + 4 k^3
  = (6 C_\textrm{qr} + 4) k^3
\end{equation*}
operations for non-leaf clusters.
In the general case, i.e., if $\ctI$ is not necessarily a binary tree,
we obtain the bound
\begin{equation*}
  (3 C_\textrm{qr} + 2) k^3 \#\sons(\ctI,t).
\end{equation*}
Adding the operations for all clusters and taking
Remark~\ref{re:leaf_partition} into account, we conclude that
preparing $(Z_t)_{t\in\ctI}$ for \emph{all} clusters takes not more
than
\begin{equation*}
  3 C_\textrm{qr} k^2 \#\Idx + (3 C_\textrm{qr} + 2) k^3 \#\ctI
\end{equation*}
operations.
Under the assumptions of Remark~\ref{re:special_case}, the first
term vanishes, since the matrices $Q_t$ have full rank in leaves,
therefore no approximation error can occur.
In this case, the computational work for preparing all matrices
$(Z_t)_{t\in\ctI\setminus\lfI}$ is in $\mathcal{O}(k^3 \#\ctI)$.
\end{remark}

%
%
\begin{remark}[Application to induced basis]
If we apply the conversion algorithm to obtain an efficient
procedure for the matrix-vector multiplication, the cluster basis
$V$ is the induced basis, and the rank of the induced basis
can be bounded by $k_A + C_\textrm{sp} k \leq C_\textrm{sp} (k_A+k)$.
For large clusters, we get a complexity of $\mathcal{O}(C_\textrm{sp}^3 (k_A+k)^3)$.
Fortunately, for the majority of small clusters, we have
$\#\hat t \leq k_A + C_\textrm{sp} k$, so the matrix $Z_t$ is smaller than
our worst-case estimate suggests and the entire algorithm is
still reasonably efficient.
\end{remark}

\section{Numerical experiments}

We consider the application of hierarchical vectors to the task
of computing eigenvectors of a matrix corresponding to a partial
differential equation.
In our case, we use a simple finite difference discretization
of Poisson's equation on the L-shaped domain
$(0,1)\times(0,1)\setminus[1/2,1]\times[1/2,1]$.
The inverse is computed by standard hierarchical matrix methods
\cite{GRHA02} and then converted into an $\mathcal{H}^2$-matrix
\cite[Section~6.5]{BO10}.
The accuracy for inversion and conversion is chosen like
$\mathcal{O}(1/n)$ in order to compensate for the growing
condition number.

%
%
\begin{figure}
  \begin{center}
    \includegraphics[width=12cm]{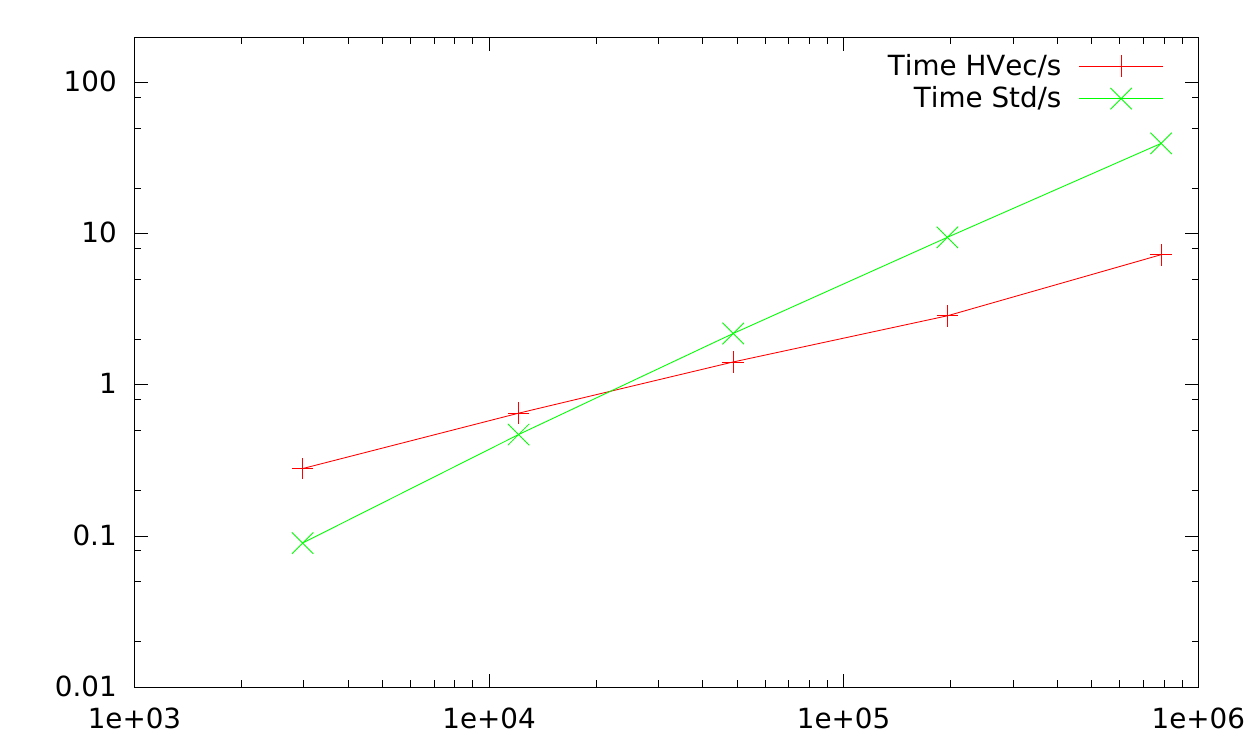}
  \end{center}

  \caption{Runtime of the inverse iteration using standard and
           hierarchical vectors for different mesh resolutions}
  \label{fi:time_vs_dofs}
\end{figure}

We use a variable-order polynomial basis \cite{BOLOME02} for the
representation of the hierarchical vectors, where we use bicubic
polynomials in the leaf clusters.
The order in each coordinate direction is increase if the
ratio of the extents of the bounding boxes of father and son
are less than $3/5$.
This approach ensures that the order used in the root cluster
increases by one each time the mesh is refined.

The standard Lagrange basis is orthogonalized \cite[Section~5.4]{BO10}
and the matrices $(\widehat{P}_t)_{t\in\ctI}$ and $(Z_t)_{t\in\ctI}$ for computing
the coarsening and projection error are constructed.
The implementation currently constructs the entire induced basis
and applies the standard backward transformation given in
Figure~\ref{fi:forward} instead of the optimized version given
in Figure~\ref{fi:induced_backward}.
This choice may lead to a loss in performance for the matrix-vector
multiplication algorithm, but it allows us to use standard functions
to deal with the induced basis instead of implementing specialized
versions for all required operations.
The theoretical estimates, particularly Theorem~23, remain valid.

Once the setup is complete, we perform 20 steps of the inverse
iteration using the $\mathcal{H}^2$-matrix approximation of the
inverse and measure the corresponding runtime.
For different refinement levels ranging from 64 to 1024 intervals
per coordinate direction, corresponding to 2977 to 784897 degrees
of freedom, the times are represented in Figure~\ref{fi:time_vs_dofs}.
we can see that hierarchical vectors are faster than standard
vectors even for relatively small problems, and that they are
faster by a factor of more than five for the largest problem.

%
%
\begin{figure}
  \begin{center}
    \includegraphics[width=12cm]{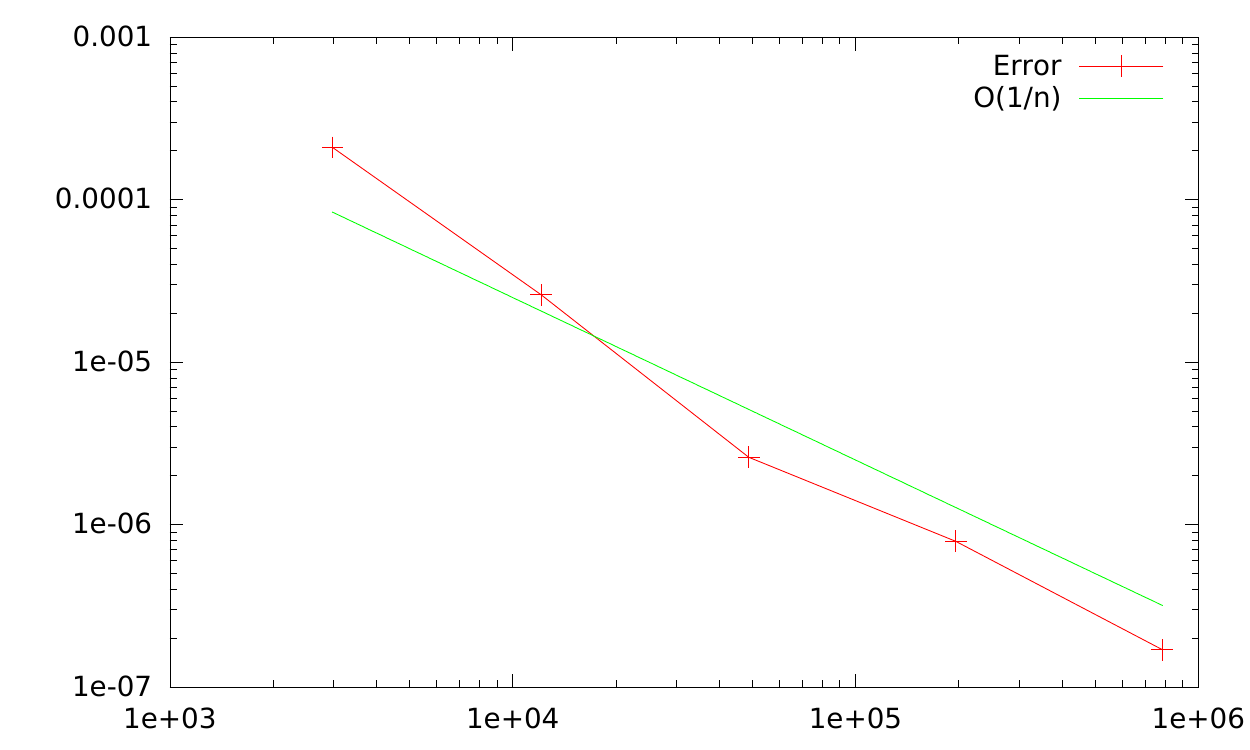}
  \end{center}

  \caption{Accuracy of the hierarchical vector for different mesh
           resolutions}
  \label{fi:eps_vs_dofs}
\end{figure}

The error tolerance for the hierarchical vector compression starts
at $\epsilon=10^{-5}$ and is approximately halved each time the
mesh is refined
The accuracy of the approximation is computed by converting the
hierarchical vector to a standard vector and finding the Euclidean
norm of the difference.
Surprisingly, the results represented in Figure~\ref{fi:eps_vs_dofs}
show that the error seems to converge at a rate of $\mathcal{O}(1/n)$,
while the given error tolerance decreases like $\mathcal{O}(1/\sqrt{n})$.

%
%
\begin{figure}
  \begin{center}
    \includegraphics[width=5cm]{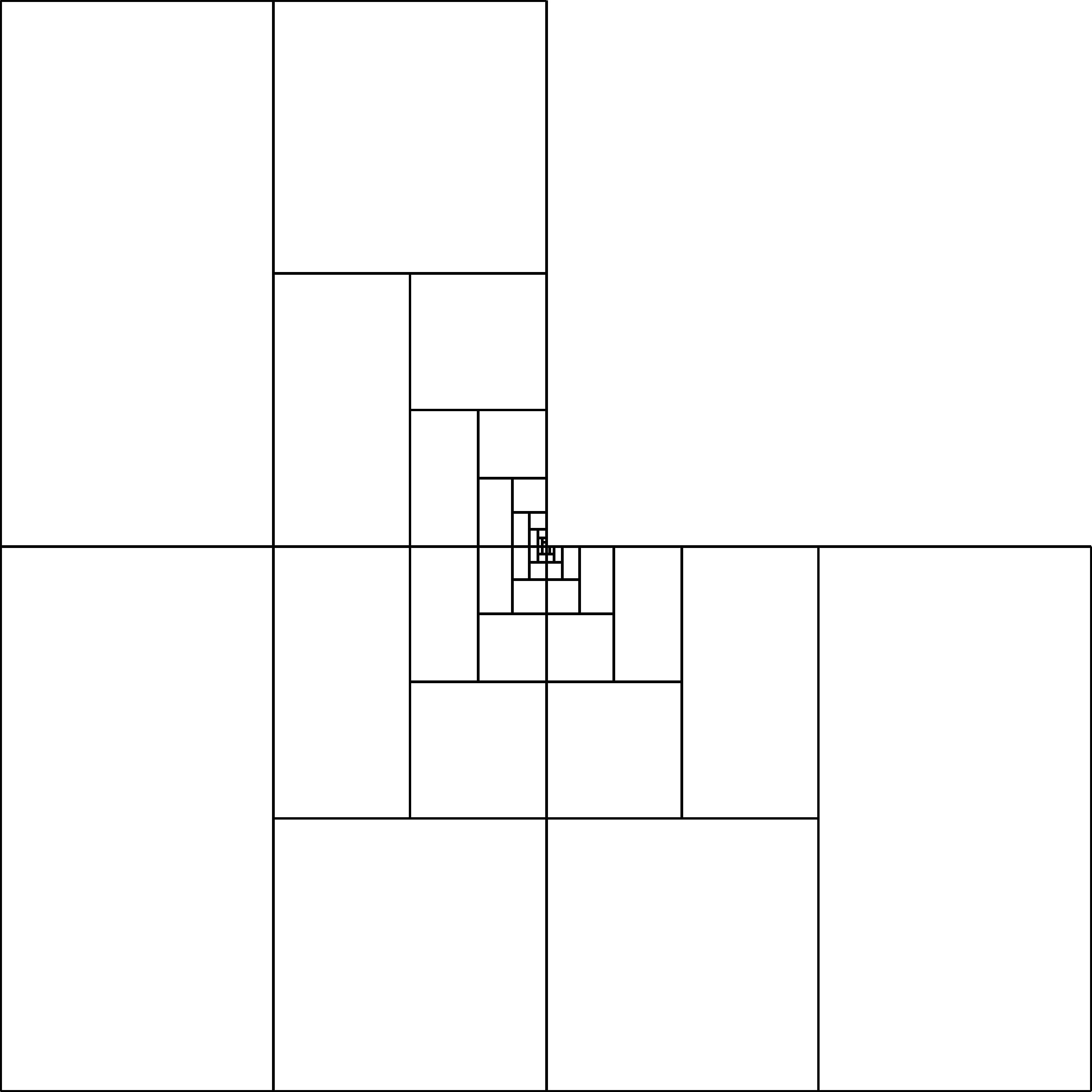}%
    \qquad%
    \includegraphics[width=5cm]{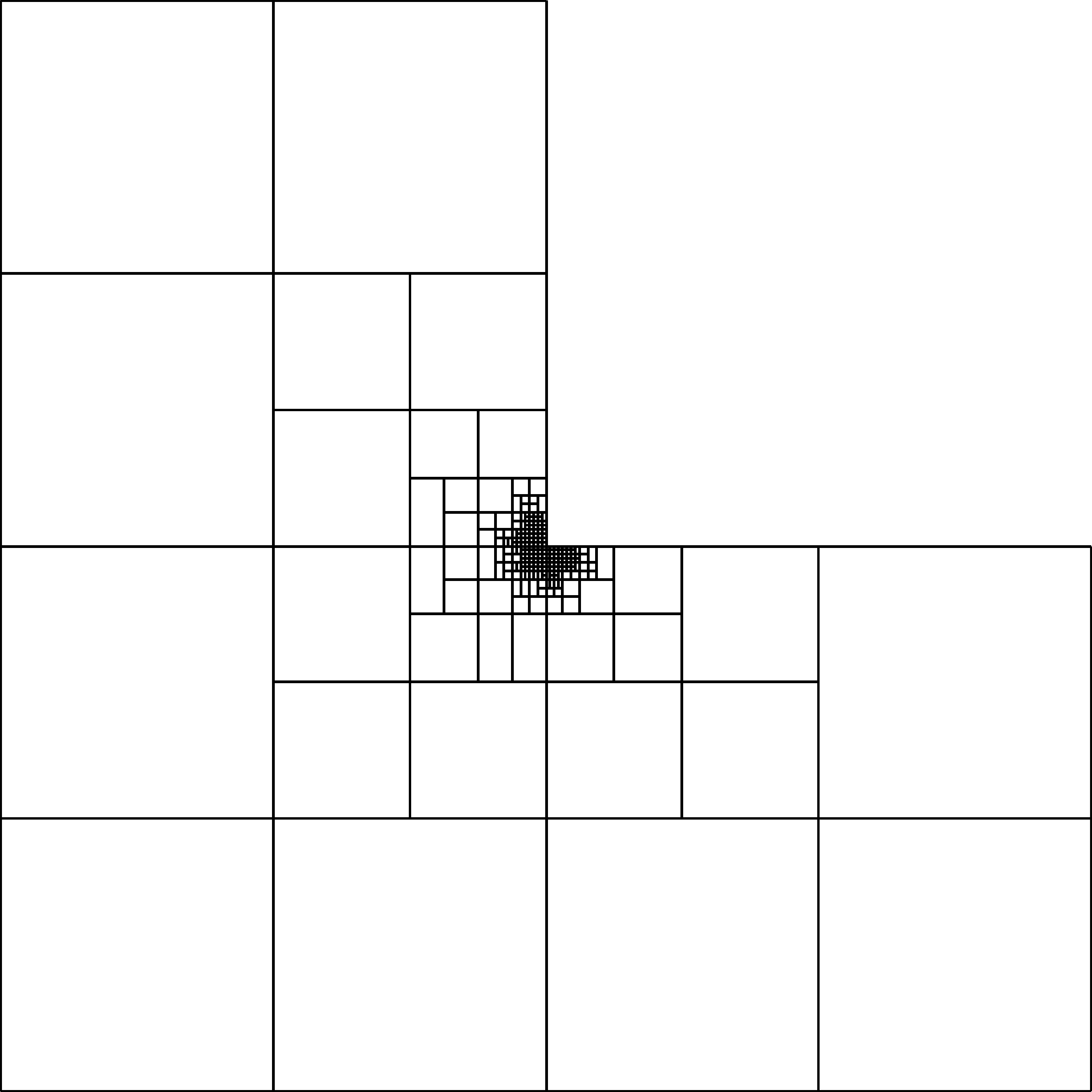}
  \end{center}

  \caption{Leaf partitions for error tolerances $5\times 10^{-7}$
           and $5\times 10^{-10}$}
  \label{fi:partitions}
\end{figure}

Next we investigate the influence of the chosen error tolerance.
We fix the mesh with 784897 degrees of freedom and consider a scale
of error tolerances between $5\times 10^{-10}$ and $5\times 10^{-7}$.
The leaf partitions (corresponding to $\lfx$) constructed by the
conversion algorithm (cf. Figure~\ref{fi:convert}) in combination with
the adaptive coarsening (cf. Figure~\ref{fi:coarsen}) are displayed in
Figure~\ref{fi:partitions}.
We can see that they display the typical behaviour of adaptively
refined meshes:
very small clusters are only used close to the singularity, while
most of the domain is covered by a few large clusters.
While the algorithm requires only $\#\ctx=86$ clusters to reach
the tolerance $5\times 10^{-7}$, it takes $\#\ctx=480$ clusters to
reach the very high tolerance $5\times 10^{-10}$.
Since we are using the standard Euclidean norm in a space of
dimension 784897, the latter is already fairly close to machine
precision.

%
%
\begin{figure}
  \begin{center}
    \includegraphics[width=12cm]{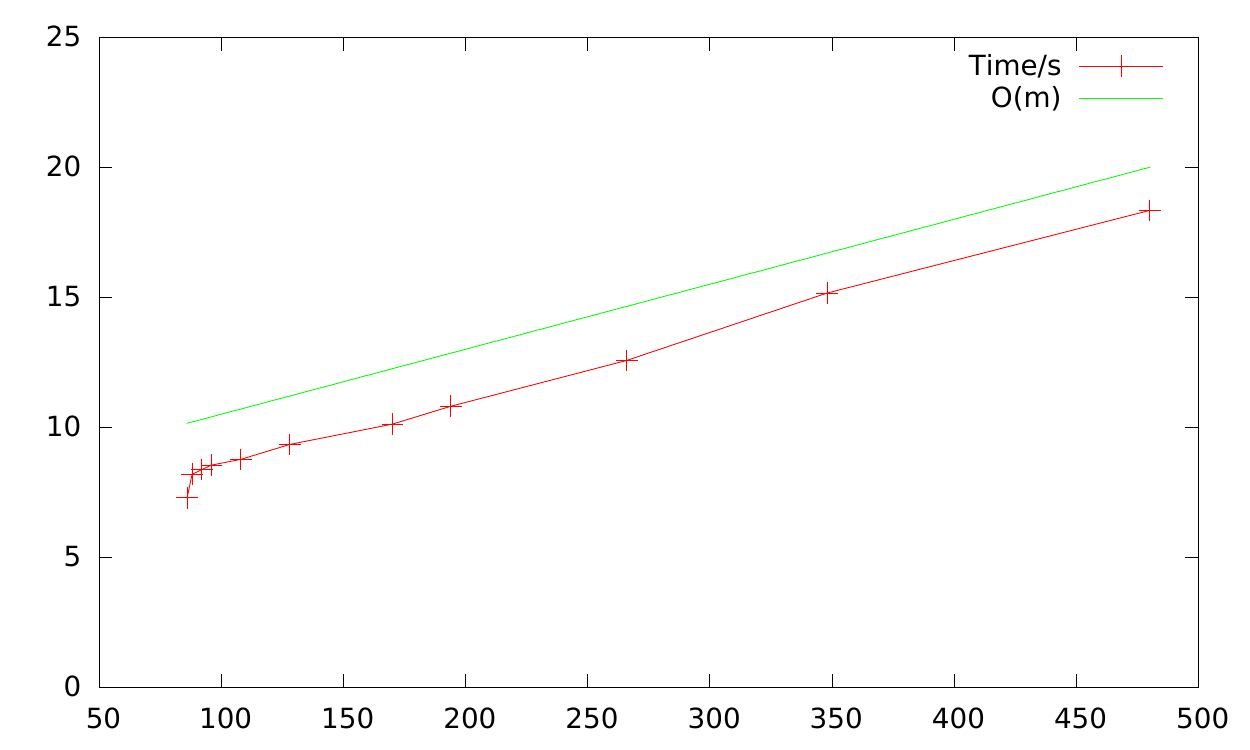}
  \end{center}

  \caption{Computing time in relation to the number $m=\#\ctx$ of clusters}
  \label{fi:time_vs_desc}
\end{figure}

Figure~\ref{fi:time_vs_desc} shows the computing time for 20
steps of the inverse iteration in relation to the number of
clusters in $\ctx$.
The results confirm the prediction of Theorem~\ref{th:eval}:
the runtime is directly proportional to $m=\#\ctx$.

Finally, Figure~\ref{fi:desc_vs_eps} shows the relation between
the error tolerance and the number of clusters in the resulting
subtree $\ctx$.
The number of clusters seems to grow like $\mathcal{O}(|\log(\epsilon)|^3)$
depending on the error tolerance $\epsilon$.

\section{Conclusion and extensions}

Hierarchical vectors provide us with a purely algebraic counterpart
of adaptively refined meshes:
clusters of indices correspond to patches of the mesh, a cluster
tree corresponds to a refinement hierarchy, cluster bases play the
role of local trial spaces, and the error matrices $(\widehat{P}_t)_{t\in\ctI}$
and $(Z_t)_{t\in\ctI}$ are used instead of local error estimators.

These error matrices provide us with the \emph{exact} error instead
of just an estimate, allowing us, e.g., to compute best approximations,
either by minimizing the number of terms required for a given accuracy
or by minimizing the error for a given number of terms.

%
%
\begin{figure}[t]
  \begin{center}
    \includegraphics[width=12cm]{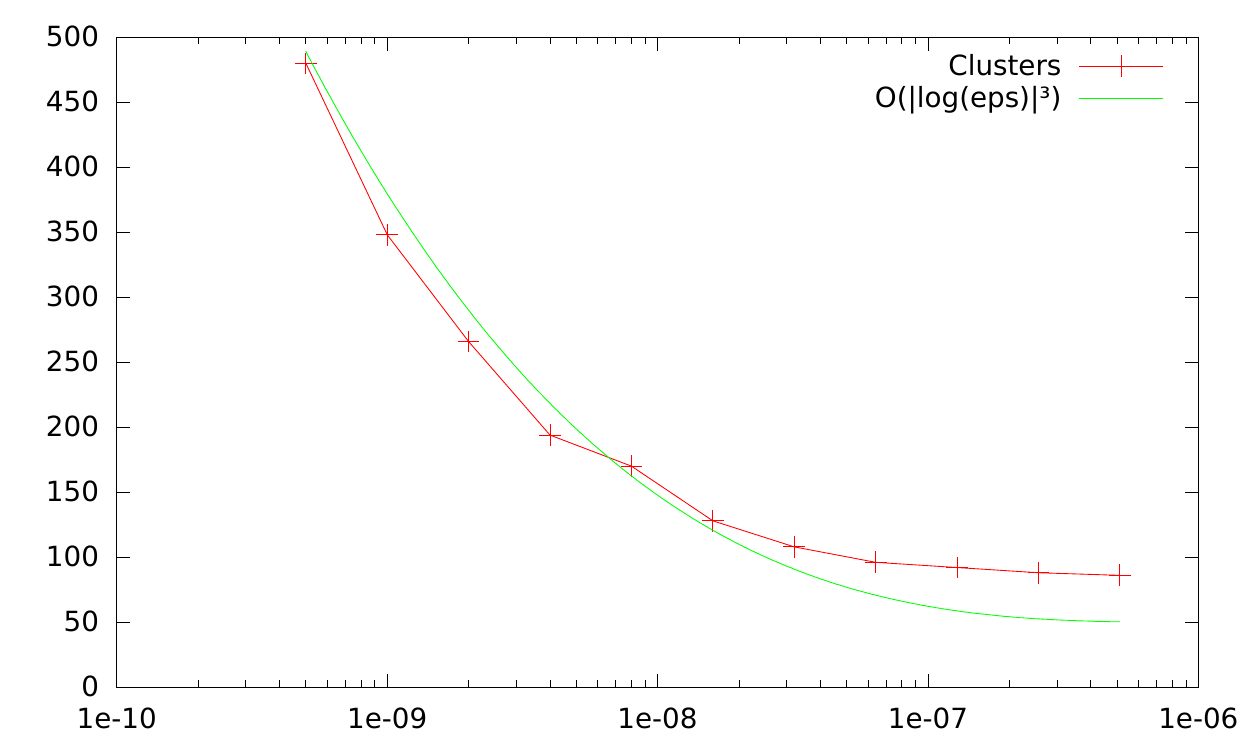}
  \end{center}

  \caption{Number of clusters $\#\ctx$ in relation to the accuracy}
  \label{fi:desc_vs_eps}
\end{figure}

The method can be applied to \emph{any} operator that can be represented
as an $\mathcal{H}^2$-matrix, e.g., to integral operators of positive
or negative order and to partial differential operators or the
corresponding solution operators.

The complexity of the algorithms can be bounded under the standard
assumption that the block tree $\ctIJ$ used for the $\mathcal{H}^2$-matrix
is sparse.
We have seen that in this case $\mathcal{O}(k^2 \#\ctx)$ operations are
required to perform a matrix-vector multiplication with a hierarchical
vector corresponding to a cluster tree $\ctx$.
The numerical experiments (cf. Figure~\ref{fi:time_vs_desc}) confirm
this estimate.

In order to obtain optimal error control, our technique relies on
the projection error matrices $(Z_t)_{t\in\ctI}$ used to adaptively convert
the intermediate solution given in the induced cluster basis into the
cluster basis $(Q_t)_{t\in\ctI}$ prescribed by the application.
With the direct approach presented in this paper, these matrices require
$\mathcal{O}(k^2 \#\ctI)$ units of storage, and constructing these
matrices requires $\mathcal{O}(k^3 \#\ctI)$ operations.
Since $\#\ctI$ is typically quite large compared to $\#\ctx$, the
setup phase of the algorithm will take far more time than the actual
matrix-vector multiplications.
\begin{itemize}
  \item A long setup phase is acceptable if a very large number of
        matrix-vector multiplications are performed, e.g., if
        we are using a timestepping scheme or if we apply an
        iterative method to compute eigenvectors or solve
        optimization problems.
  \item If we only require an upper bound for the error, we can
        construct small local error matrices for each matrix block
        instead of the global error matrices $(Z_t)_{t\in\ctI}$, or we can
        even just compute a bound for the norm of these matrices by a simple
        power iteration.
  \item If we are using a regular mesh, we can construct a cluster tree
        such that all clusters on a given level are identical up to
        translation.
        In this case, all transfer matrices and coupling matrices will
        also be identical on a level, so it suffices to carry out each step
        of the setup only once \emph{per level} instead of once per cluster.
        Under typical assumptions, this reduces the setup complexity
        to $\mathcal{O}(k^3 \log \#\ctI)$.
\end{itemize}
The experiments presented in the current paper use an $\mathcal{H}^2$-matrix
approximation of the inverse.
In order to improve the performance, it might make sense to replace the
direct evaluation of the inverse by a iterative scheme with a preconditioner
based on an $\mathcal{H}^2$-LR or $\mathcal{H}^2$-Cholesky factorization
\cite{BORE14}.
This approach would require us to compute the residual $r=b-Ax$ of the
linear system, and this residual will in general not be as smooth as
the solution, so the strict accuracy conditions imposed by our algorithm
would lead to the subtree $\ctr$ corresponding to $r$ becoming very large.
Since the residual is only used to improve an approximate solution $x$,
it might be possible to relax the accuracy conditions, e.g., by ensuring
that $\ctr$ is constructed from $\ctx$ by refining each leaf cluster at
most once.


\bibliographystyle{amsplain}
\bibliography{scicomp}
\end{document}